\newtheoremstyle{exampstyle}
  {\topsep} 
  {\topsep} 
  {\itshape} 
  {} 
  {\bfseries} 
  {.} 
  {.5em} 
  {} 
\theoremstyle{exampstyle}
\numberwithin{equation}{section}
\newtheorem{lem}{Lemma}[section]
\newtheorem{theorem}{Theorem}[section]
\newtheorem{defi}{Definition}[section]
\newtheorem{coro}{Corollary}[section]
\newtheorem{prob}{Problem}[section]
\newtheorem{algo}{Algorithm}[section]
\newtheorem{remark}{Remark}[section]
\newtheorem{example}{Example}[section]
\definecolor{orange}{RGB}{255,127,0}
\definecolor{purple}{RGB}{128,0,128}
\definecolor{pink}{RGB}{255,160,150}
\begin{document}

\title[Multi-term Time-Fractional Diffusion Equation]{Strong positivity property and a related inverse source problem for multi-term time-fractional diffusion equations}

\author[X. Yang 
]
{Xiaona Yang}
\address{Xiaona Yang \newline
School of Mathematics and Statistics,
Shandong University of Technology.
266 Xincunxi Road, Zibo, Shandong 255049, China}
\email{xiaonayang1004@163.com}

\author[Z. Li 
]
{Zhiyuan Li}
\address{Zhiyuan Li \newline
School of Mathematics and Statistics,
Shandong University of Technology.
266 Xincunxi Road, Zibo, Shandong 255049, China}
\email{zyli@sdut.edu.cn}

\thanks{Submitted \today.}
\subjclass[2010]{35K65, 35R30, 35B53}
\keywords{fractional diffusion equation; inverse source problem; nonlocal observation observation; uniqueness; Tikhonov regularization.
}

\begin{abstract}

In this article, we consider the diffusion equation with multi-term time-fractional derivatives. We first derive that the solution is positive when the initial value is non-negative by a subordination principle for the solution. As an application, we prove the uniqueness of solution to an inverse problem of determination of the temporally varying source term by integral type information in a subdomain. Finally, several numerical experiments are presented to show the accuracy and efficiency of the algorithm.
\end{abstract}

\maketitle

\section{introduction and main results}

In this article, we assume $T$ is a given positive number and $\Omega\subset \mathbb R^d$ is a bounded domain with sufficiently smooth boundary $\partial\Omega$, and we  consider the following initial-boundary value problem:
\begin{equation}
\label{eq-gov}
\left\{
\begin{alignedat}{2}
&\sum_{j=1}^\ell q_j\partial_t^{\alpha_j} (u-u_0) + A(x)u = F(x,t) &\quad& \mbox{in }\Omega\subset \mathbb R^d,\ t\in(0,T),\\
&u(x,0)=u_0,&\quad&\mbox{in }\Omega,\\
&u(x,t)=0,&\quad& \mbox{on }\partial\Omega\times(0,T),
\end{alignedat}
\right.
\end{equation}
where the elliptic operator $A$ is defined for $\psi\in D(A):=\{\psi\in H^2(\Omega);\,\psi=0\mbox{ on }\partial\Omega\}$ as
$$
A\psi(x):=-\sum_{i,j=1}^d\partial_j(a_{ij}(x)\partial_i\psi(x))+c(x)\psi(x),
$$
where we assume $a_{ij}=a_{ji}\in C^1(\overline\Omega)$ ($1\le i,j\le d$), $c(\ge0)\in L^\infty(\Omega)$ and there exists a constant $a_0>0$ such that
$$
\sum_{i,j=1}^da_{ij}(x)\xi_i\xi_j\ge a_0\sum_{i=1}^d\xi_i^2,\quad\forall\,x\in\overline\Omega,\ \forall\,(\xi_1,\ldots,\xi_d)\in\mathbb R^d.
$$
$q_j$, $j=1,2,\cdots,\ell$, are positive constants, $0<\alpha_\ell<\cdots<\alpha_1<1$, and $\partial_t^{\alpha}$ is the $\alpha$-th order Caputo derivative and it should be understood as the inverse of the Riemann-Liouville fractional integral. We will give the details including the definition of the Caputo derivative and the related terminologies in the next section.

The study of such fractional diffusion equations was initially motivated by some physical models. For example, fractional diffusion equations have received great attention in applied disciplines, e.g., in describing some anomalous phenomena including the non-Fickian growth rates, skewness and long-tailed profile which are poorly characterized by the classical diffusion equations (see e.g., Benson, Wheatcraft and Meerschaert \cite{B00}, Levy and Berkowitz \cite{LB03} and the references therein). From the perspective of theoretical research, for example, the Caputo derivative admits a relaxation effect because of its nonlocality in time, which makes long-time decay rate be much slower than the parabolic case (see, e.g., Li, Luchko and Yamamoto \cite{LLY14} and Li, Liu and Yamamoto \cite{LLY15}).
There are also some publications on some important properties showing certain properties similar to classical parabolic equations. For example, a recent result from Li and Yamamoto \cite{LY-FCAA} shows  that the unique continuation principle for one dimensional time-fractional diffusion equation still holds true. We can also see that a maximum principle in the usual setting still holds similarly to the parabolic equation see, e.g., Al-Refai and Luchko \cite{AL} and Luchko \cite{L09} established the maximum principle by a key estimate of the Caputo derivative at an extreme point. Furthermore, Liu, Rundell and Yamamoto \cite{LRY} asserted the positivity property of the solution for the single-term time-fractional diffusion equation but except for a finite set for each $x\in\Omega$.  Later the above positivity property was further generalized to the multi-term case by Liu \cite{Liu17}.  In a recent survey paper Luchko and Yamamoto \cite{LY} the strict positivity of the solution is proved for single-term fractional diffusion equation with strictly elliptic operator. We also refer to Jia, Peng and Yang \cite{JPY} for the strong maximum principle for the diffusion equation involving time-fractional derivative and fractional Laplacian. It reveals that there is no result for the strict  positivity of the solution of the multi-term time-fractional diffusion equation.

In this paper, we will first construct a subordinate principle to solution of parabolic equation for the multi-term time-fractional diffusion equation. Then by means of subordination, we can investigate the strong positivity property of the solution from the subordination identity in view of the results from parabolic case.

\begin{theorem}
\label{thm-sp}
Assume $d\le 3$ and $u\in C([0,T]; L^2(\Omega))\cap C((0,T];H^2(\Omega)\cap H_0^1(\Omega))$ is a solution to the problem \eqref{eq-gov} with homogeneous source term $F=0$ and nonnegative initial value $u_0\ge0,\not\equiv0$ in $\Omega$. Then the inequality $u(x,t)>0$ holds true for any $(x,t)\in\Omega\times(0,T)$.
\end{theorem}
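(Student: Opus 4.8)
The plan is to establish the positivity through a subordination principle that connects the solution of the multi-term time-fractional problem to the solution of the corresponding classical parabolic equation, and then to invoke the strict positivity available in the parabolic case. Since the paper announces that it will ``first construct a subordinate principle to solution of parabolic equation,'' I would take as the starting point a representation of the form
\begin{equation}
\label{eq-subord-plan}
u(x,t)=\int_0^\infty \varphi(t,\tau)\,w(x,\tau)\,d\tau,
\end{equation}
where $w(x,\tau)$ solves the parabolic problem $\partial_\tau w + A(x)w = 0$ in $\Omega$ with the same initial datum $w(x,0)=u_0$ and homogeneous Dirichlet boundary condition, and $\varphi(t,\tau)$ is a subordination kernel determined by the multi-term fractional structure $\sum_{j=1}^\ell q_j\partial_t^{\alpha_j}$. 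The first step is therefore to derive \eqref{eq-subord-plan} and, crucially, to prove that the kernel satisfies $\varphi(t,\tau)\ge 0$ for all $t\in(0,T)$ and $\tau>0$, together with $\varphi(t,\cdot)\not\equiv 0$. For the single-term case this kernel is built from a Mittag-Leffler / stable-type density and its nonnegativity is classical; for the multi-term case I expect the kernel to be expressible through the Laplace transform of $1/\bigl(\sum_j q_j s^{\alpha_j}\bigr)$ and to inherit complete monotonicity properties, which is exactly what guarantees $\varphi\ge 0$.

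The second step is to recall the strict positivity for the parabolic solution: under the stated hypotheses on $A$ (strict ellipticity, $c\ge 0$, smooth coefficients) and with $u_0\ge 0$, $u_0\not\equiv 0$, the strong maximum principle for uniformly parabolic equations gives $w(x,\tau)>0$ for every $x\in\Omega$ and every $\tau>0$. I would state this as a consequence of the standard Hopf/strong maximum principle or, equivalently, of the strict positivity of the parabolic semigroup $e^{-\tau A}$ acting on nonnegative nontrivial data. The dimensional restriction $d\le 3$ is presumably needed to guarantee, via Sobolev embedding, that the solution class $C((0,T];H^2\cap H_0^1)$ controls pointwise values and that the subordination integral \eqref{eq-subord-plan} converges and can be manipulated pointwise in $x$.

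The third step combines the two: because $\varphi(t,\tau)\ge 0$ is not identically zero in $\tau$ and $w(x,\tau)>0$ for all $\tau>0$, the integral in \eqref{eq-subord-plan} is a superposition of strictly positive functions against a nonnegative, nontrivial weight, hence $u(x,t)>0$ for every $(x,t)\in\Omega\times(0,T)$. The only delicate point here is to ensure that the positivity of $\varphi$ really holds on a set of $\tau$ of positive measure for each fixed $t\in(0,T)$, so that the product $\varphi(t,\tau)w(x,\tau)$ is strictly positive on a nonnull set; this follows once we know the kernel is a genuine (nontrivial) probability-type density in $\tau$.

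I expect the main obstacle to be precisely the construction of the subordination kernel and the proof of its nonnegativity in the multi-term setting. In the single-term problem the kernel is a scaled one-sided stable density, whose nonnegativity is well documented, but when several fractional orders $\alpha_1>\cdots>\alpha_\ell$ are mixed, the symbol $\sum_{j=1}^\ell q_j s^{\alpha_j}$ is no longer a pure power, and one must argue that its associated relaxation function remains completely monotone (equivalently, that the inverse Laplace transform defining $\varphi$ stays nonnegative). The natural route is to show that $s\mapsto \bigl(\sum_j q_j s^{\alpha_j}\bigr)^{-1}$ is completely monotone and to invoke Bernstein's theorem together with a Post--Widder or contour-integral representation, thereby expressing $\varphi$ as a nonnegative measure. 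Once nonnegativity of $\varphi$ is secured, the remainder of the argument is a routine combination with the parabolic strong maximum principle.
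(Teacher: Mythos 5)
Your proposal follows essentially the same route as the paper: a Laplace-transform subordination identity $u(t)=\int_0^\infty w(\tau)\,K_\alpha(t,\tau)\,d\tau$ relating $u$ to the parabolic solution $w$, nonnegativity of the kernel via complete monotonicity and Bernstein's theorem, and the strong maximum principle for the parabolic problem to get $w>0$. The only minor discrepancy is the precise symbol: the paper's kernel has $t$-Laplace transform $s^{-1}Q(s)e^{-\tau Q(s)}=\sum_j q_j s^{\alpha_j-1}\prod_k \exp\{-\tau q_k s^{\alpha_k}\}$, whose complete monotonicity follows from the product/composition rules you cite, rather than the transform of $1/Q(s)$ that you guessed, but this does not change the argument.
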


As an application, in equation \eqref{eq-gov} the term can be written as $F(x,t)=g(t)f(x)$. Problems of this type have important applications in several fields of applied science and engineering. For example, the identification of $g(t)$ fits, for example, in the case  of disasters of nuclear power plants, in which the source location can be assumed to be known but the decay of the radiative strength in time is unknown and crucial to be estimated. However, usually this term cannot be directly measured due to the mixing of the effects of several factors, which requires one to use inverse problems to identify these quantities by involving additional information that can be observed or measured practically. For recovering the source term, sometimes we have to measure the interior observation of the solution $u$ since it is difficult to obtain the information on the whole domain at the stage of the diffusion processes. In this paper, our next main concern is:
\begin{prob}
\label{prob-isp}
Assuming $\omega$ is nonempty and open subset of $\Omega$, we consider to determine the unknown source $g$ from the nonlocal integral observation $\int_\omega u(x,t) dx$, $t\in(0,T)$.
\end{prob}

As is known, inverse $t$-source problems for time-fractional diffusion equations are well studied in the literature. Here we do not intend to give a complete list of references, and one can consult Sakamoto and Yamamoto \cite{SY11-JMAA},  Fujishiro and Kian \cite{FK}, Wei, Li and Li \cite{WLL} , Liu, Rundell and Yamamoto \cite{LRY}, Liu and Zhang \cite{LZ}, Ruan and Wang \cite{RW} for example. See also Aleroev, Kirane and Malik \cite{AKM} where the $\omega=\Omega$ but the source term has a more general form: $F(x,t)=\lambda(t)f(x,t)$. For other related works on the inverse $t$-source problems , see also Jin and Rundell \cite{JR}, and Wang and Wu \cite{WW}. It reveals that most of publications on inverse $t$-source problems  for fractional equations are concerned with the symmetric case. Moreover, see for example Huang, Li and Yamamoto \cite{HLY}, Jiang, Li, Liu and Yamamoto \cite{JLLY},  Li and Zhang \cite{LiZ}, Sakamoto and Yamamoto \cite{SY11-MCRF}, Wang, Zhou and Wei \cite{WZW},   Zhang and Xu \cite{ZX} and the references therein for inverse $x$-source problems. 
As far as the author's knowledge, there is no publication dealt with inverse $t$-source problem with multi-term fractional derivatives from integral type observation.

In this paper, for the  equation \eqref{eq-gov}, we show the temporal component can be uniquely determined from the nonlocal integral observation data by the use of the strong positivity of the solution. We have
\begin{theorem}
\label{thm-isp}
Assume $f(x)\in L^2(\Omega)$ is non-negative and not identically vanished in $\Omega$, and suppose $u$ solves the initial-boundary value problem \eqref{eq-gov}. Let $\omega$ be a fixed nonempty  and open subdomain of $\Omega$. Then $g=0$ if $\int_{\omega}u(x,t)dx=0$ for all $t\in(0,T)$.
\end{theorem}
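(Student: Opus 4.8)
The plan is to convert the vanishing observation into a first-kind Volterra convolution equation and then exploit positivity to show that its kernel does not vanish near $t=0$. Throughout I take $u_0\equiv0$, which isolates the contribution of the source (the part of the solution generated by a fixed initial datum is independent of $g$ and may be subtracted). Let $\{(\lambda_n,\varphi_n)\}_{n\ge1}$ denote the Dirichlet eigenpairs of $A$, so that $0<\lambda_1\le\lambda_2\le\cdots$ and $\{\varphi_n\}$ is an orthonormal basis of $L^2(\Omega)$. Writing $f_n=(f,\varphi_n)$ and $\phi_n=\int_\omega\varphi_n\,dx$, projecting \eqref{eq-gov} onto $\varphi_n$ gives the scalar problem $\sum_{j=1}^\ell q_j\partial_t^{\alpha_j}u_n+\lambda_n u_n=g(t)f_n$ with $u_n(0)=0$, whose solution is the convolution $u_n=f_n\,(h_n*g)$, where the impulse response $h_n$ is characterized by its Laplace transform $\widehat{h_n}(z)=\bigl(\sum_{j=1}^\ell q_jz^{\alpha_j}+\lambda_n\bigr)^{-1}$. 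Consequently the observation equals $\int_\omega u(x,t)\,dx=(h*g)(t)$ with $h:=\sum_{n\ge1}f_n\phi_n\,h_n$, and the hypothesis becomes $(h*g)(t)=0$ for all $t\in(0,T)$.

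Next I would establish that $\inf\operatorname{supp}h=0$, i.e. that $h$ does not vanish identically on any interval $(0,\delta)$. Since each $h_n$ is real-analytic on $(0,\infty)$ and the series defining $h$ converges locally uniformly there, $h$ is real-analytic on $(0,\infty)$; hence it suffices to show $h\not\equiv0$. For this I compute the Laplace transform
\[
\widehat{h}(p)=\sum_{n\ge1}\frac{f_n\phi_n}{\sum_{j=1}^\ell q_jp^{\alpha_j}+\lambda_n}=\int_\omega W(x,p)\,dx,\qquad W(\cdot,p):=\bigl(\mu(p)+A\bigr)^{-1}f,
\]
for real $p>0$, where $\mu(p):=\sum_{j=1}^\ell q_jp^{\alpha_j}>0$. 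The key point is that $\mu(p)+A$ is a coercive, strictly elliptic operator with nonnegative zeroth-order coefficient $\mu(p)+c\ge\mu(p)>0$; thus by the strong maximum principle its resolvent maps the nonnegative, nontrivial datum $f$ to a function $W(\cdot,p)$ that is strictly positive throughout $\Omega$, even where $f$ itself vanishes. Since $\omega$ is a nonempty open set, this yields $\widehat{h}(p)=\int_\omega W(x,p)\,dx>0$, so $h\not\equiv0$ and therefore $\inf\operatorname{supp}h=0$. This strict, domain-filling positivity is precisely the Laplace-domain counterpart of the strong positivity established in Theorem \ref{thm-sp}.

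Finally I would apply the Titchmarsh convolution theorem to the identity $h*g=0$ on $(0,T)$. It gives $\inf\operatorname{supp}h+\inf\operatorname{supp}g\ge T$; combined with $\inf\operatorname{supp}h=0$ from the previous step, this forces $\inf\operatorname{supp}g\ge T$, i.e. $g=0$ a.e. on $(0,T)$, which is the assertion.

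I expect the main obstacle to lie in the second step, namely in making the passage from $\widehat h(p)>0$ to the non-degeneracy of $h$ fully rigorous: one must verify the convergence and term-by-term Laplace inversion of the eigenfunction series defining $h$, the analyticity of $h$ on $(0,\infty)$ (using the asymptotics $\widehat{h_n}(z)\sim (q_1 z^{\alpha_1})^{-1}$ as $z\to\infty$, so that $h_n(t)=O(t^{\alpha_1-1})$ near $0$ and the kernel is integrable), and above all the strict positivity of $W(\cdot,p)$ on all of $\Omega$ when $\operatorname{supp}f$ may be disjoint from $\omega$ — this spreading of positivity, rather than mere nonnegativity, is what makes the observation region $\omega$ arbitrary and is where the strong positivity genuinely enters. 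A secondary technical point is confirming that $g,h\in L^1(0,T)$ so that the Titchmarsh theorem applies; alternatively, one may avoid Titchmarsh by extending the analysis to $(0,\infty)$ and dividing out the analytic factor $\widehat g$ in $\widehat g(z)\,\widehat h(z)=0$, concluding $\widehat g\equiv0$ from $\widehat h\not\equiv0$.
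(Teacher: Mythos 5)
Your proposal is correct in outline and shares the paper's overall skeleton --- reduce the vanishing observation to a first-kind convolution equation, apply the Titchmarsh convolution theorem, and kill one of the two factors by positivity --- but the way you obtain the crucial non-degeneracy of the kernel is genuinely different. The paper (Lemma \ref{lem-ii}) quotes Liu's Duhamel-type representation $u(\cdot,t)=\int_0^t\mu(t-\tau)v(\cdot,\tau)\,d\tau$, with $\sum_{j=1}^\ell q_jJ^{1-\alpha_j}\mu=g$ and $v$ solving the homogeneous problem \eqref{eq-homo} with initial datum $f$, to get $\sum_{j=1}^\ell q_jJ^{1-\alpha_j}\bigl(\int_\omega u\,dx\bigr)=\int_0^t g(t-\tau)\bigl(\int_\omega v(x,\tau)\,dx\bigr)d\tau$; the kernel $\int_\omega v\,dx$ is then \emph{strictly positive} on $(0,T)$ by Theorem \ref{thm-sp} (the paper's main strong positivity result, proved via subordination to the heat equation), so the Titchmarsh dichotomy collapses at once. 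You instead derive the convolution form spectrally, and prove non-degeneracy of your kernel $h$ in the Laplace domain: $\widehat h(p)=\int_\omega(\mu(p)+A)^{-1}f\,dx>0$ by the classical \emph{elliptic} strong maximum principle applied to the resolvent, combined with real-analyticity of $h$ on $(0,\infty)$ to conclude $\inf\operatorname{supp}h=0$. This bypasses Theorem \ref{thm-sp} and the entire subordination machinery, and it needs only $h\not\equiv0$ plus analyticity rather than pointwise strict positivity of a time-fractional solution --- a genuine economy, and one that makes the inverse-problem uniqueness independent of the paper's first theorem. The price is exactly the technical overhead you flag and do not carry out: convergence of $\sum_n f_n\phi_n h_n$ and its term-by-term Laplace inversion (which, via Weyl asymptotics, again wants $d\le3$ or extra smoothing), the $t^{\alpha_1-1}$-integrability near $t=0$, continuity of $(\mu(p)+A)^{-1}f$ so that the strong maximum principle applies to an $L^2$ datum, and the $L^1$ hypotheses of Titchmarsh. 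In the paper's route all of this is absorbed into Theorem \ref{thm-sp} and the cited representation lemma.

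One caution on your closing remark: the suggested way to avoid Titchmarsh --- extending to $(0,\infty)$ and dividing out $\widehat g$ in $\widehat g(z)\widehat h(z)=0$ --- does not work. The identity $h*g=0$ is known only on $(0,T)$; extending $g$ by zero does not make $h*g$ vanish on $(T,\infty)$, so you cannot take Laplace transforms of the convolution over the half-line. Titchmarsh's theorem is local in precisely the way you need, and it (or an equivalent local argument) is unavoidable here; this is also exactly how the paper concludes.
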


For proving Theorem \ref{thm-sp}, several technical lemmas for completely monotonic functions and analyticity of the solution to the problem with $F=0$ are needed, so we collect them in Section \ref{sec-pre}. Preparing all necessities, we will establish a subordinate principle to solution of parabolic equation, from which will finish the proof of Theorem \ref{thm-sp} in Section \ref{sec-sp}. In the following Section \ref{sec-isp}, an integral identity is to be put forward with which the uniqueness in Theorem \ref{thm-isp} can be proved via the strong positivity of the solution established in the above section.
In Section \ref{sec-num}, an iteration method based on the Tikhonov regularization is designed to obtain the numerical solution for our inverse source problem, and some typical numerical experiments are tested to verified the validity of the iteration scheme.
Finally, concluding remarks are given in Section \ref{sec-rem}.

\section{Preliminaries}
\label{sec-pre}
\subsection{Fractional calculus}
In this part, we first set up notations and terminologies, and review some of standard facts on the fractional calculus.


We define the Riemann-Liouville fractional integral $J^\alpha$ of order $\alpha>0$.
\begin{equation}
\label{defi-RL}
J^\alpha g(t) = \frac1{\Gamma(\alpha)} \int_0^t (t-\tau)^{\alpha-1} g(\tau) d\tau,\quad t>0,
\end{equation}
where $\Gamma(\cdot)$ is the Gamma function. By $\mathcal R(J^\alpha)$ we denote the domain and the range of the operator $J^\alpha:L^2(0,T)\to L^2(0,T)$. It is known the Riemann-Liouville integral operator $J^{\alpha}: L^2(0,T) \to \mathcal R(J^\alpha)$ is bijective, see e.g., Gorenflo, Luchko and Yamamoto \cite{GLY15}. Then  we define the time-fractional derivative $\partial_t^\alpha$ ($0<\alpha<1$) on $\mathcal R(J^\alpha)$ by
$$
\partial_t^\alpha g = J^{-\alpha} g := (J^{\alpha})^{-1}g, \quad g \in \mathcal R(J^\alpha).
$$
Moreover, from the definition of the Riemann-Liouville integral operator, the Caputo derivative $\partial_t^\alpha$ can be rephrased as
$$
\partial_t^\alpha g(t)= \frac{d}{d t}J^{1-\alpha}g(t),\quad g\in \mathcal R(J^\alpha).
$$
Parallelly, we define the backward Riemann-Liouville integral operator $J^\alpha_{T-}$ by
$$
J^\alpha_{T-}g(t)=\frac1{\Gamma(\alpha)}\int_t^T \frac{g(\tau)}{(\tau-t)^{1-\alpha}} d\tau, \quad 0<t<T,
$$
and the backward Caputo derivative by $\partial_{T-}^\alpha g(t):= -\frac{d}{d t}J^{1-\alpha}_{T-}g(t)$, $g\in \mathcal R(J_{T-}^\alpha)$.

We will show several useful lemmata which are related to the above fractional integral and derivative and they will be used in the
forthcoming discussion. The first one is about the convolution formula for the Riemann-Liouville fractional integral.
\begin{lem}
\label{lem-convo-RL}
Let $\alpha>0$ and $g,h\in L^2(0,T)$, then
$$
J^\alpha\left(\int_0^t g(\tau) h(t-\tau) d\tau \right)
=\int_0^t  g(\tau) (J^\alpha h)(t-\tau) d\tau,\quad 0<t<T.
$$
\end{lem}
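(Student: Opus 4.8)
The plan is to recognize that the operator $J^\alpha$ is itself a Laplace convolution against the kernel $k_\alpha(t):=t^{\alpha-1}/\Gamma(\alpha)$, so that the claimed identity is nothing but the associativity and commutativity of convolution on $(0,t)$. Writing $(\phi\ast\psi)(t):=\int_0^t\phi(\tau)\psi(t-\tau)\,d\tau$, the left-hand side reads $k_\alpha\ast(g\ast h)$ while the right-hand side reads $g\ast(k_\alpha\ast h)$, so everything reduces to the elementary identity $k_\alpha\ast(g\ast h)=g\ast(k_\alpha\ast h)$, which follows from Fubini's theorem.

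Concretely, first I would expand the left-hand side as the iterated integral
$$
J^\alpha\Big(\int_0^\cdot g(\tau)h(\cdot-\tau)\,d\tau\Big)(t)
=\frac1{\Gamma(\alpha)}\int_0^t(t-s)^{\alpha-1}\Big(\int_0^s g(\tau)h(s-\tau)\,d\tau\Big)ds,
$$
which is an integral over the triangle $\{(s,\tau):0\le\tau\le s\le t\}$. Next I would interchange the order of integration, integrating first in $s$ over $[\tau,t]$ and then in $\tau$ over $[0,t]$, to obtain
$$
\int_0^t g(\tau)\Big(\frac1{\Gamma(\alpha)}\int_\tau^t(t-s)^{\alpha-1}h(s-\tau)\,ds\Big)d\tau.
$$
Finally, the substitution $\sigma=s-\tau$ turns the inner integral into $\frac1{\Gamma(\alpha)}\int_0^{t-\tau}(t-\tau-\sigma)^{\alpha-1}h(\sigma)\,d\sigma=(J^\alpha h)(t-\tau)$, which is exactly the right-hand side of the claimed formula.

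The only point requiring care, and the one I expect to be the main (though mild) obstacle, is the justification of the interchange of integration, i.e.\ the absolute integrability of $(t-s)^{\alpha-1}g(\tau)h(s-\tau)$ over the triangle. Since $\alpha>0$, the kernel $k_\alpha$ lies in $L^1(0,T)$, and since $g,h\in L^2(0,T)$, the Cauchy--Schwarz inequality gives the uniform bound $(|g|\ast|h|)(s)\le\|g\|_{L^2(0,T)}\|h\|_{L^2(0,T)}$; hence
$$
\int_0^t(t-s)^{\alpha-1}(|g|\ast|h|)(s)\,ds
\le\|g\|_{L^2(0,T)}\,\|h\|_{L^2(0,T)}\int_0^t(t-s)^{\alpha-1}\,ds<\infty,
$$
so Fubini's theorem applies and the formal computation above is rigorous for every $t\in(0,T)$.
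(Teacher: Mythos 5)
Your proof is correct and follows essentially the same route as the paper's: expand $J^\alpha$ of the convolution as an iterated integral, apply Fubini over the triangle $\{0\le\tau\le s\le t\}$, and conclude by a change of variables in the inner integral. The only difference is cosmetic--you also supply the absolute-integrability bound (via Cauchy--Schwarz and $k_\alpha\in L^1$) justifying Fubini, which the paper invokes without comment.
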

\begin{proof}
From the definition \eqref{defi-RL} of the Riemman-Liouville fractional integral $J^\alpha$, it follows that
$$
J^\alpha \left(\int_0^t g(\tau) h(t-\tau) d\tau \right)
=\frac1{\Gamma(\alpha)} \int_0^t (t-\tau)^{\alpha-1} \int_0^\tau g(\eta) h(\tau-\eta) d\eta d\tau.
$$
By the Fubini lemma, we see that
$$
J^\alpha \left(\int_0^t g(\tau) h(t-\tau) d\tau \right)
=\frac1{\Gamma(\alpha)} \int_0^t g(\eta)  d\eta\int_\eta^t (t-\tau)^{\alpha - 1} h(\tau-\eta) d\tau.
$$
After the change of the variable, we find
\begin{align*}
J^\alpha \left(\int_0^t g(\tau) h(t-\tau) d\tau \right)
=& \frac1{\Gamma(\alpha)} \int_0^t g(t-\eta) d\eta \int_0^\eta (\eta-\tau)^{\alpha -1} h(\tau) d\tau
\\
=& \int_0^t g(t-\eta) J^\alpha h(\eta) d\eta.
\end{align*}
We complete the proof of the lemma.
\end{proof}

\begin{lem}
\label{lem-semigroup}
Let $g\in L^2(0,T)$ and $\alpha,\beta>0$, then
$$
J_{T-}^\alpha (J_{T-}^\beta g) = J_{T-}^{\alpha+\beta} g.
$$
\end{lem}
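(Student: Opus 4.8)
The plan is to prove the semigroup property $J_{T-}^\alpha(J_{T-}^\beta g) = J_{T-}^{\alpha+\beta}g$ for the backward Riemann-Liouville integral. This is the time-reversed analogue of the standard semigroup identity for the forward operator $J^\alpha$, so the cleanest approach is to reduce to the forward case via a reflection, or to compute directly and collapse the resulting Beta-function integral. I expect the direct computation to be the most transparent, and the only genuine obstacle is the bookkeeping of the iterated integral and the identification of the inner integral with the Beta function.

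First I would write out the definition of the backward operator twice. By definition,
\begin{equation*}
J_{T-}^\alpha(J_{T-}^\beta g)(t) = \frac{1}{\Gamma(\alpha)}\int_t^T (s-t)^{\alpha-1} \left(\frac{1}{\Gamma(\beta)}\int_s^T (\tau - s)^{\beta-1} g(\tau)\,d\tau\right) ds.
\end{equation*}
Then I would invoke Fubini's theorem to exchange the order of integration. The region of integration is $\{(s,\tau) : t < s < \tau < T\}$, so after swapping we obtain
\begin{equation*}
J_{T-}^\alpha(J_{T-}^\beta g)(t) = \frac{1}{\Gamma(\alpha)\Gamma(\beta)}\int_t^T g(\tau)\left(\int_t^\tau (s-t)^{\alpha-1}(\tau-s)^{\beta-1}\,ds\right)d\tau.
\end{equation*}

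Next I would evaluate the inner integral in $s$ by the substitution $s = t + (\tau - t)\sigma$ with $\sigma\in(0,1)$, which turns it into $(\tau-t)^{\alpha+\beta-1}\int_0^1 \sigma^{\alpha-1}(1-\sigma)^{\beta-1}\,d\sigma = (\tau-t)^{\alpha+\beta-1}B(\alpha,\beta)$. Using the classical relation $B(\alpha,\beta) = \Gamma(\alpha)\Gamma(\beta)/\Gamma(\alpha+\beta)$, the prefactors cancel and I am left with
\begin{equation*}
J_{T-}^\alpha(J_{T-}^\beta g)(t) = \frac{1}{\Gamma(\alpha+\beta)}\int_t^T (\tau-t)^{\alpha+\beta-1} g(\tau)\,d\tau = J_{T-}^{\alpha+\beta}g(t),
\end{equation*}
which is exactly the claim.

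The main technical point to justify is the application of Fubini, which requires absolute integrability of $(s-t)^{\alpha-1}(\tau-s)^{\beta-1}g(\tau)$ over the triangular region; since $g\in L^2(0,T)$ and the weak singularities $(s-t)^{\alpha-1}$ and $(\tau-s)^{\beta-1}$ are locally integrable for $\alpha,\beta>0$, this is routine via Cauchy-Schwarz or by noting the convolution structure. Alternatively, one could avoid the direct computation entirely by setting $\tilde g(r) := g(T-r)$ and checking that $J_{T-}^\alpha g(t) = (J^\alpha \tilde g)(T-t)$, so that the backward identity follows immediately from the already-known forward semigroup property; I would mention this reflection as the conceptual reason the statement holds, but present the direct Beta-integral computation as the self-contained proof.
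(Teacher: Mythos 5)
Your proof is correct and follows essentially the same route as the paper: write out the iterated backward integrals, apply Fubini over the triangular region, and collapse the inner integral via the Beta function identity $B(\alpha,\beta)=\Gamma(\alpha)\Gamma(\beta)/\Gamma(\alpha+\beta)$. Your explicit substitution for the Beta integral and the remark on justifying Fubini are minor additions the paper leaves implicit, but the argument is the same.
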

\begin{proof}
From the definition of the backward Riemann-Liouville integral, we see that
$$
J_{T-}^\alpha (J_{T-}^\beta g) = \frac1{\Gamma(\alpha)} \int_t^T (\tau-t)^{\alpha-1} \left [ \frac1{\Gamma(\beta)} \int_\tau^T (\eta - \tau)^{\beta-1} g(\eta) d\eta \right] d\tau,
$$
from which we further employ the Fubini lemma to derive that
\begin{align*}
J_{T-}^\alpha (J_{T-}^\beta g) &= \frac1{\Gamma(\alpha)\Gamma(\beta)} \int_t^T  \left[\int_t^\eta  (\tau-t)^{\alpha-1}  (\eta - \tau)^{\beta-1}  d\tau \right] g(\eta)d\eta
\\
&=  \frac1{\Gamma(\alpha)\Gamma(\beta)} \int_t^T \frac{\Gamma(\alpha)\Gamma(\beta)}{\Gamma(\alpha+\beta)} (\eta -t)^{\alpha+\beta-1} g(\eta)d\eta
\\
&= \frac1{\Gamma(\alpha+\beta)} \int_t^T (\eta - t)^{\alpha+\beta -1} g(\eta) d\eta = J_{T-}^{\alpha+\beta} g.
\end{align*}
We finish the proof of the lemma.
\end{proof}

\begin{lem}
\label{lem-igbp}
Let $g,h\in L^2(0,T)$ and $\alpha>0$, then
$$
\int_0^T \left[J^\alpha g(t)\right] h(t) dt = \int_0^T g(t) J_{T-}^\alpha h(t) dt.
$$
\end{lem}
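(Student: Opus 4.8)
The plan is to prove this ``fractional integration by parts'' identity by a single application of Fubini's theorem, in exactly the same spirit as the proofs of Lemmas \ref{lem-convo-RL} and \ref{lem-semigroup}. First I would unfold the definition \eqref{defi-RL} of $J^\alpha$ on the left-hand side to write it as an iterated integral over the triangular region $\{0<\tau<t<T\}$:
$$
\int_0^T \left[J^\alpha g(t)\right] h(t)\, dt = \frac1{\Gamma(\alpha)} \int_0^T h(t) \int_0^t (t-\tau)^{\alpha-1} g(\tau)\, d\tau\, dt.
$$

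The central step is to interchange the order of integration. For fixed $\tau\in(0,T)$, the variable $t$ now ranges over $(\tau,T)$, so after swapping we obtain
$$
\int_0^T \left[J^\alpha g(t)\right] h(t)\, dt = \frac1{\Gamma(\alpha)} \int_0^T g(\tau) \int_\tau^T (t-\tau)^{\alpha-1} h(t)\, dt\, d\tau.
$$
Comparing the inner integral with the definition of the backward Riemann--Liouville integral, I would recognize it as exactly $\Gamma(\alpha)\, J_{T-}^\alpha h(\tau)$; cancelling $\Gamma(\alpha)$ and relabeling $\tau$ as $t$ then yields the claimed right-hand side immediately.

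The only point requiring care --- and the main, though mild, obstacle --- is the legitimacy of the Fubini interchange, i.e.\ the absolute integrability of $(t-\tau)^{\alpha-1} g(\tau) h(t)$ over the triangle. I would justify this by recalling, as noted before Lemma \ref{lem-convo-RL}, that $J^\alpha$ maps $L^2(0,T)$ boundedly into $L^2(0,T)$; applying this to $|g|$ gives $J^\alpha|g|\in L^2(0,T)$, and then the Cauchy--Schwarz inequality yields $\int_0^T \left(J^\alpha|g|\right)(t)\,|h(t)|\, dt \le \|J^\alpha|g|\|_{L^2(0,T)}\,\|h\|_{L^2(0,T)} < \infty$, which is precisely the absolute convergence of the double integral. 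With this in hand Fubini's theorem applies and the computation above is rigorous; no further estimates are needed.
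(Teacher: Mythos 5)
Your proof is correct, but it is worth noting that the paper does not actually prove Lemma \ref{lem-igbp} at all: it simply cites Theorem 3.5 of Samko, Kilbas and Marichev \cite{SK93}, where this fractional integration-by-parts formula is a classical result (stated there under $L^p$--$L^q$ hypotheses that comfortably cover the present $L^2$--$L^2$ case). What you supply instead is a short, self-contained argument in the same style as the paper's own proofs of Lemmas \ref{lem-convo-RL} and \ref{lem-semigroup}: unfold the definition of $J^\alpha$, swap the order of integration over the triangle $\{0<\tau<t<T\}$, and recognize the inner integral as $\Gamma(\alpha)\,J_{T-}^\alpha h(\tau)$. Your justification of the interchange is also sound: applying the $L^2$-boundedness of $J^\alpha$ (which follows from Young's inequality, since the kernel $t^{\alpha-1}$ lies in $L^1(0,T)$) to $|g|$ and then Cauchy--Schwarz shows that the iterated integral of the nonnegative integrand $(t-\tau)^{\alpha-1}|g(\tau)||h(t)|$ is finite, so Tonelli's theorem licenses Fubini for the signed integrand. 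The trade-off is the usual one: the citation keeps the paper shorter and leans on a standard reference, while your version makes the preliminaries self-contained and makes explicit exactly which integrability fact is needed --- arguably a better fit with the surrounding lemmas, which are all proved by the same Fubini device.
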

This lemma is derived from Theorem 3.5 in Samko, Kilbas and Marichev \cite{SK93} .
\begin{coro}
Let $\alpha\in(0,1)$, $g\in \mathcal R(J^\alpha)$ and $h\in \mathcal R(J_{T-}^\alpha)$, then
$$
\int_0^T \left[\partial_t^\alpha g(t)\right] h(t) dt = \int_0^T g(t) \partial_{T-}^\alpha h(t) dt.
$$
\end{coro}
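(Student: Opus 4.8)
The plan is to reduce the identity to a single application of Lemma \ref{lem-igbp} by exploiting the bijectivity of the Riemann--Liouville operators on their ranges. First I would observe that, since $g\in\mathcal R(J^\alpha)$ and $\partial_t^\alpha$ is defined as the inverse $(J^\alpha)^{-1}$, the function $\phi:=\partial_t^\alpha g$ belongs to $L^2(0,T)$ and satisfies $g=J^\alpha\phi$. Symmetrically, because $h\in\mathcal R(J_{T-}^\alpha)$ and $\partial_{T-}^\alpha$ acts as the inverse of the backward integral $J_{T-}^\alpha$ on its range, the function $\psi:=\partial_{T-}^\alpha h$ lies in $L^2(0,T)$ and obeys $h=J_{T-}^\alpha\psi$.

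With these identifications the two sides of the asserted identity become $\int_0^T \phi(t)\,h(t)\,dt$ and $\int_0^T g(t)\,\psi(t)\,dt$. Applying Lemma \ref{lem-igbp} to the pair $\phi,\psi\in L^2(0,T)$ yields
\begin{equation*}
\int_0^T \bigl[J^\alpha\phi(t)\bigr]\psi(t)\,dt=\int_0^T \phi(t)\,J_{T-}^\alpha\psi(t)\,dt.
\end{equation*}
Substituting $J^\alpha\phi=g$ on the left-hand side and $J_{T-}^\alpha\psi=h$ on the right-hand side then gives precisely
$\int_0^T g(t)\,\partial_{T-}^\alpha h(t)\,dt=\int_0^T \partial_t^\alpha g(t)\,h(t)\,dt$,
which is the claimed corollary.

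Since the computation collapses to one invocation of the preceding lemma, I do not expect any serious analytic obstacle. The only point requiring care is the justification that the derivatives $\partial_t^\alpha$ and $\partial_{T-}^\alpha$ genuinely act as left inverses of $J^\alpha$ and $J_{T-}^\alpha$ on the respective ranges, so that $g=J^\alpha(\partial_t^\alpha g)$ and $h=J_{T-}^\alpha(\partial_{T-}^\alpha h)$; this is exactly the bijectivity statement recorded at the start of this subsection (citing \cite{GLY15}), and the backward analogue follows by the same argument after the reflection $t\mapsto T-t$. One should also confirm that $\phi$ and $\psi$ indeed lie in $L^2(0,T)$ so that Lemma \ref{lem-igbp} applies verbatim, which is immediate from the definition of the ranges $\mathcal R(J^\alpha)$ and $\mathcal R(J_{T-}^\alpha)$.
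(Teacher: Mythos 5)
Your proposal is correct and takes essentially the same route as the paper: both arguments write $g = J^\alpha\phi$ and $h = J_{T-}^\alpha\psi$ with $\phi=\partial_t^\alpha g$, $\psi=\partial_{T-}^\alpha h$ in $L^2(0,T)$, reduce the two sides to $\int_0^T \phi(t) h(t)\,dt$ and $\int_0^T g(t)\psi(t)\,dt$, and close the gap with a single application of Lemma \ref{lem-igbp}. The only cosmetic difference is in justifying $\partial_{T-}^\alpha h=\psi$: the paper computes $-\frac{d}{dt}J_{T-}^{1-\alpha}J_{T-}^{\alpha}\psi=\psi$ via the backward semigroup property (Lemma \ref{lem-semigroup}), while you appeal to the reflection $t\mapsto T-t$, which amounts to the same thing.
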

\begin{proof}
From the assumptions in this lemma, we can choose $\phi,\psi\in L^2(0,T)$ such that $g=J^\alpha\phi$ and $h=J_{T-}^\alpha \psi$ respectively. Moreover, we note that $\partial_t^\alpha g(t) = \frac{d}{dt} J^{1-\alpha} g$ and then we have
$$
\partial_t^\alpha g(t) = \frac{d}{dt} J^{1-\alpha} \left( J^\alpha \phi \right) =  \frac{d}{dt} \int_0^t  \phi(\tau) d\tau = \phi(t),
$$
from which we further see that
$$
\int_0^T \left[\partial_t^\alpha g(t)\right] h(t) dt = \int_0^T \phi(t) h(t) dt.
$$
By noting the semigroup property of the backward Riemann-Liouville integral in Lemma \ref{lem-semigroup}, we can similarly obtain that
$$
\int_0^T g(t) \partial_{T-}^\alpha h(t) dt = -\int_0^T g(t) \partial_t J_{T-}^{1-\alpha} J_{T-}^\alpha \psi(t) dt = \int_0^T g(t) \psi(t) dt.
$$
On the other hand, noting that $h=J_{T-}^\alpha \psi$, we conclude from Lemma \ref{lem-igbp} that
$$
\int_0^T \phi(t) h(t) dt = \int_0^T \phi(t) J_{T-}^\alpha \psi(t) dt =  \int_0^T \left[ J^\alpha\phi(t) \right]  \psi(t) dt,
$$
which combined with the fact $J^\alpha \phi = g$ implies
$$
\int_0^T \phi(t) h(t) dt =  \int_0^T g(t)  \psi(t) dt.
$$
We complete the proof of the lemma.
\end{proof}

The further properties of the fractional integral and fractional derivative can be found in e.g., Kilbas, Srivastava and Trujillo \cite{K06} and Podlubny \cite{P99}.

\subsection{Completely monotonic function}
In this part, we give the definition of the completely monotonic functions and list several relevant  results.
\begin{defi}
A function f with domain $(0,\infty)$ is said to be completely monotonic $(c.m.)$,
if it possesses derivatives $f^{(n)}(x)$ for all $n=0,1,\cdots$ and if
$$
(-1)^{n}f^{(n)}(x)>0,\quad \forall x>0.
$$
\end{defi}
\begin{lem}
\label{lem-cm1}
If $f(x)$ and $g(x)$ are c.m.,
then $af(x)+bg(x)$ where $a$ and $b$ are nonnegative constants and $f(x)g(x)$ are also c.m.
\end{lem}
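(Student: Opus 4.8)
The plan is to argue directly from the definition of complete monotonicity: for each candidate function I differentiate $n$ times and check that $(-1)^n$ times the result is positive for every $x>0$ and every $n\ge0$. Since $f$ and $g$ possess derivatives of all orders on $(0,\infty)$, so do $af+bg$ and the product $fg$; hence all the derivatives in question exist and the whole matter reduces to sign bookkeeping. I treat the two assertions separately.

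For the linear combination, linearity of differentiation gives
$$
(-1)^n\bigl(af+bg\bigr)^{(n)}(x)=a\,(-1)^nf^{(n)}(x)+b\,(-1)^ng^{(n)}(x).
$$
Each factor $(-1)^nf^{(n)}(x)$ and $(-1)^ng^{(n)}(x)$ is positive because $f$ and $g$ are c.m., and since $a,b\ge0$ the right-hand side is a nonnegative combination of positive quantities, hence positive as soon as $a$ or $b$ is strictly positive. This disposes of the first claim.

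For the product, the key tool is the general Leibniz rule
$$
(fg)^{(n)}(x)=\sum_{k=0}^n\binom{n}{k}f^{(k)}(x)\,g^{(n-k)}(x).
$$
Writing $(-1)^n=(-1)^k(-1)^{n-k}$ and regrouping yields
$$
(-1)^n(fg)^{(n)}(x)=\sum_{k=0}^n\binom{n}{k}\bigl[(-1)^kf^{(k)}(x)\bigr]\bigl[(-1)^{n-k}g^{(n-k)}(x)\bigr].
$$
In each summand the binomial coefficient is positive, the bracket $(-1)^kf^{(k)}(x)$ is positive since $f$ is c.m., and the bracket $(-1)^{n-k}g^{(n-k)}(x)$ is positive since $g$ is c.m.; therefore every term is positive and so is the sum. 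This gives $(-1)^n(fg)^{(n)}(x)>0$ for all $x>0$ and all $n\ge0$, proving that $fg$ is c.m.

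The argument presents no genuine obstacle; the only point requiring care is the sign accounting in the Leibniz expansion, where one must split $(-1)^n$ as $(-1)^k(-1)^{n-k}$ so that each derivative is paired with exactly the power of $-1$ that makes the complete-monotonicity hypothesis applicable to it. If one prefers the convention that c.m. allows $(-1)^nf^{(n)}\ge0$ rather than $>0$, the identical computation gives $\ge0$ throughout, and strictness for the sum simply requires $a>0$ or $b>0$.
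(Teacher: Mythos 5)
Your proof is correct. Note that the paper itself contains no proof of this lemma: it simply defers to pp.~2--4 of Miller and Samko \cite{MS01}. The argument given there is essentially the one you wrote out — linearity of differentiation for $af+bg$, and the general Leibniz rule with the sign splitting $(-1)^n=(-1)^k(-1)^{n-k}$ for the product — so your write-up is a faithful, self-contained replacement for the citation rather than a genuinely different route. Two details you handled well are worth keeping: under the paper's strict-inequality definition of c.m., the combination $af+bg$ degenerates to the zero function when $a=b=0$, so strict positivity indeed requires $a>0$ or $b>0$ (the lemma's statement glosses over this); and your closing remark about the $\ge 0$ convention is exactly how this is reconciled in the literature, where complete monotonicity is usually defined with a non-strict inequality.
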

\begin{lem}
\label{lem-cm2}
Let $f(x)$ and $g(x)$ be c.m.,
then
\begin{gather*}
f\left(a+b\int_{0}^{x}g(t) dt\right),
\end{gather*}
where $a$ and $b$ are arbitrary constants, also is c.m., in particular,
the following functions are c.m.
$$
f(ax^{\alpha}+b),\quad a\geq0,\ b\geq0\mbox{ and } 0\leq\alpha\leq1,
$$
\end{lem}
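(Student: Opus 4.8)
The plan is to prove the general composition statement by a direct computation of the derivatives of $H(x):=f\!\left(a+b\int_0^x g(t)\,dt\right)$, and then to recover the displayed special case from it. First I would make explicit the standing assumptions that are implicit in the statement: that $\int_0^x g(t)\,dt$ converges (which holds in every instance where we apply the lemma), and that $a\ge0$, $b\ge0$, so that the inner function $h(x):=a+b\int_0^x g(t)\,dt$ takes values in $(0,\infty)$, the set on which $f$ and all of its derivatives are defined and have controlled sign. With $h$ so defined, I note $h'(x)=b\,g(x)$ and hence $h^{(i)}(x)=b\,g^{(i-1)}(x)$ for every $i\ge1$; since $g$ is c.m. we have $\operatorname{sgn}g^{(i-1)}(x)=(-1)^{i-1}$, and because $b\ge0$ this gives $\operatorname{sgn}h^{(i)}(x)=(-1)^{i-1}$ for all $x>0$.

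The core of the argument is the Faà di Bruno formula for the $n$-th derivative of the composition $H=f\circ h$,
\begin{equation*}
H^{(n)}(x)=\sum \frac{n!}{\prod_{i}m_i!\,(i!)^{m_i}}\,f^{(k)}\!\big(h(x)\big)\prod_{i}\big(h^{(i)}(x)\big)^{m_i},
\end{equation*}
where the sum runs over all tuples $(m_1,\dots,m_n)$ of nonnegative integers with $\sum_i i\,m_i=n$ and $k=\sum_i m_i$. I would then track the sign of a generic summand. Since $f$ is c.m. and $h(x)>0$, the factor $f^{(k)}(h(x))$ has sign $(-1)^k$; by the previous paragraph $\prod_i(h^{(i)}(x))^{m_i}$ has sign $(-1)^{\sum_i(i-1)m_i}$. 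The key bookkeeping identity is $\sum_i(i-1)m_i=\sum_i i\,m_i-\sum_i m_i=n-k$, so each summand carries the sign $(-1)^{k}(-1)^{n-k}=(-1)^n$, independent of the partition. As all the combinatorial coefficients are positive, the whole sum has sign $(-1)^n$, i.e. $(-1)^nH^{(n)}(x)>0$ for every $x>0$, which is exactly the assertion that $H$ is c.m.

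For the special case I would take $g(t)=t^{\alpha-1}$ with $0<\alpha<1$: differentiating gives $g^{(n)}(t)=(\alpha-1)\cdots(\alpha-n)\,t^{\alpha-1-n}$, a product of $n$ negative factors times a positive power, hence of sign $(-1)^n$, so $g$ is c.m. Since $\int_0^x t^{\alpha-1}\,dt=x^\alpha/\alpha$, we may write $a x^\alpha+b=b+(a\alpha)\int_0^x t^{\alpha-1}\,dt$, and the general statement with additive constant $b\ge0$ and multiplicative constant $a\alpha\ge0$ shows that $f(ax^\alpha+b)$ is c.m. The endpoints $\alpha=0$ (constant argument) and $\alpha=1$, where $\frac{d^n}{dx^n}f(ax+b)=a^n f^{(n)}(ax+b)$ has sign $(-1)^n$, follow directly from the chain rule.

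I expect the main obstacle to be purely the sign bookkeeping inside Faà di Bruno, namely checking that every term of the expansion shares the common sign $(-1)^n$ through the identity $k+\sum_i(i-1)m_i=n$, regardless of the underlying partition. A secondary point worth flagging is the hypothesis on $a,b$: the phrase ``arbitrary constants'' should be read as $a,b\ge0$ together with convergence of the integral, since otherwise $h$ can leave $(0,\infty)$ and $f\circ h$ need not even be defined. An alternative route would invoke Bernstein's representation $f(x)=\int_0^\infty e^{-xs}\,d\mu(s)$ and the fact that $h$ is a Bernstein function, reducing everything to the complete monotonicity of $x\mapsto e^{-sh(x)}$; but this merely relocates the same combinatorial content and requires importing the Bernstein--Widder theorem, so I would prefer the elementary computation above.
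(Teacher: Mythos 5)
Your proof is correct, but be aware that the paper does not actually prove this lemma at all: it defers entirely to pp.~2--4 of Miller and Samko \cite{MS01}, where the statement is the composition theorem for completely monotonic functions with Bernstein functions (nonnegative functions with c.m.\ derivative). So your argument is a genuinely self-contained alternative rather than a reproduction of the paper's reasoning. The sign bookkeeping in your Fa\`a di Bruno expansion checks out: $f^{(k)}(h(x))$ contributes $(-1)^k$, the product $\prod_i \bigl(h^{(i)}(x)\bigr)^{m_i}$ contributes $(-1)^{\sum_i (i-1)m_i}=(-1)^{n-k}$ via $h^{(i)}=b\,g^{(i-1)}$, and the identity $\sum_i i\,m_i=n$, $\sum_i m_i=k$ forces every summand to carry the common sign $(-1)^n$; the reduction of $f(ax^\alpha+b)$ to the general statement through $g(t)=t^{\alpha-1}$ and $ax^\alpha+b=b+(a\alpha)\int_0^x t^{\alpha-1}\,dt$ is also exactly right. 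The proof in Miller--Samko that the paper leans on has the same mathematical content but is organized differently: it proceeds by induction on the order of the derivative, using the Leibniz rule and the closure of c.m.\ functions under products (the paper's Lemma \ref{lem-cm1}) instead of the explicit combinatorial expansion; your version buys a one-shot, fully explicit computation at the price of invoking Fa\`a di Bruno. Two of your side remarks deserve emphasis. First, you are right that ``arbitrary constants'' in the lemma cannot be taken literally: one needs $a,b\ge 0$ (and convergence of $\int_0^x g$) for the composition to stay in the domain of $f$, so the lemma as printed is imprecise and your reading is the correct one. Second, under the paper's definition of complete monotonicity, which requires the \emph{strict} inequality $(-1)^n f^{(n)}(x)>0$, degenerate parameter choices such as $b=0$ (or $a=0$ in the special case) produce constant compositions that fail the strict version; your argument yields the strict sign precisely when $b>0$, which covers every instance in which the lemma is actually applied in the paper (in Lemma \ref{lem-K}, where $\tau q_k>0$).
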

The proofs of the above two lemmas can be found on pp. 2-4 in Miller and Samko \cite{MS01}
\begin{lem}
\label{lem-bern}
A necessary and sufficient condition that the function $f(x)$ should be completely monotonic in the interval $0<x<\infty$ is that
$$
f(x)=\int_{0}^{\infty}e^{-xt} d\alpha(t) ,
$$
where $\alpha(t)$ is a non-decreasing function of such a nature that the integral converges for $x>0$.

The above lemma is well known as the Bernstein theorem, and we refer to e.g., Miller and Samko  \cite{MS01}
\end{lem}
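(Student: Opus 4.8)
The statement is the Hausdorff--Bernstein--Widder theorem, and I would prove the two implications separately, throughout identifying the non-decreasing function $\alpha$ with the positive Borel measure $d\alpha$ on $[0,\infty)$ that it induces.

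\textbf{Sufficiency.} Suppose $f(x)=\int_0^\infty e^{-xt}\,d\alpha(t)$ converges for every $x>0$. Fixing $x_0>0$, for $x\ge x_0$ one has the domination $t^n e^{-xt}\le t^n e^{-x_0 t}\le C_n e^{-(x_0/2)t}$ for large $t$, and $\int_0^\infty e^{-(x_0/2)t}\,d\alpha(t)<\infty$ by hypothesis, so the difference quotients are dominated by an $\alpha$-integrable function and I may differentiate under the integral sign repeatedly to get
$$
(-1)^n f^{(n)}(x)=\int_0^\infty t^n e^{-xt}\,d\alpha(t)\ge 0,\qquad n=0,1,2,\dots
$$
This yields complete monotonicity in the weak sense; to match the \emph{strict} inequality used in the definition above, I would note that each of these integrals is strictly positive precisely when $d\alpha$ charges $(0,\infty)$ (a single atom at $t=0$ gives a constant function, which is only weakly c.m.), and so I would record that hypothesis explicitly.

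\textbf{Necessity.} This is the substantial direction. The plan is to reduce it to the \emph{Hausdorff moment problem} by sampling $f$ on arithmetic lattices and then reassembling the lattice representations into a single measure. First I would fix $h>0$ and set $c_n:=f((n+1)h)$ for $n\ge0$. The $k$-th forward difference of this sequence equals $\Delta_h^k f$ evaluated at a lattice point, and the mean-value theorem for finite differences gives $\Delta_h^k f(x)=h^k f^{(k)}(\xi)$ for some $\xi$ in the corresponding interval, whence $(-1)^k\Delta^k c_n=h^k(-1)^k f^{(k)}(\xi)\ge0$. Thus $(c_n)$ is a bounded completely monotone \emph{sequence}, and by Hausdorff's theorem there is a positive measure $\beta_h$ on $[0,1]$ with $c_n=\int_0^1 s^n\,d\beta_h(s)$. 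After an elementary reindexing and the change of variables $s=e^{-h\tau}$, this becomes a Laplace representation $f(kh)=\int_0^\infty e^{-kh\tau}\,d\gamma_h(\tau)$ on the lattice $\{kh:k\ge1\}$ for a positive measure $\gamma_h$ on $[0,\infty)$.

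Next I would refine the lattice by taking $h=h_m=2^{-m}$, so the lattices are nested and their union is dense in $(0,\infty)$. The Laplace transforms $\int_0^\infty e^{-x\tau}\,d\gamma_{h_m}(\tau)=f(x)$ are uniformly bounded for $x$ in a fixed compact subinterval of $(0,\infty)$, which furnishes both a uniform mass bound and exponential tail control preventing mass from escaping to $\tau=+\infty$; Helly's selection theorem then extracts a vaguely convergent subsequence with limit a positive measure $\alpha$. A consistency argument, using that the Laplace transform is injective and that both sides are continuous in $x$, shows $f(x)=\int_0^\infty e^{-x\tau}\,d\alpha(t)$ first on the dense set of lattice points and then for all $x>0$; moreover if $f$ is strictly c.m. the limiting $\alpha$ necessarily charges $(0,\infty)$, since otherwise $f$ would be constant.

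\textbf{Main obstacle.} The crux is the final passage from the countable family $\{\gamma_{h_m}\}$ of lattice representations to one measure valid for all $x$. Two technical points must be handled carefully: (a) the tightness/mass bound guaranteeing that Helly's theorem applies and that no mass is lost at infinity, obtained from the uniform bound on $f$ over a fixed interval together with the tail estimate $\int_{\tau>R}e^{-x\tau}\,d\gamma_h\le e^{-(x-x')R}f(x')$; and (b) justifying the interchange of the vague limit with the integral when evaluating the transform of $\alpha$, which follows from the same uniform-integrability estimate. I would also need Hausdorff's theorem itself; should it not be citable, it is proved by approximating $c_n$ with the Bernstein polynomials $\sum_k c_k\binom{n}{k}s^k(1-s)^{n-k}$ and extracting a weak limit of the associated discrete measures, which is structurally the same Helly-type compactness step -- so the whole argument ultimately hinges on one selection-and-tightness lemma.
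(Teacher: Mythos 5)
The paper offers no proof of this lemma at all: it is quoted as the classical Bernstein (Hausdorff--Bernstein--Widder) theorem and delegated to the literature, namely Miller and Samko \cite{MS01}, so there is no internal argument to compare against, and your decision to actually prove it is the genuinely different route. Your sketch follows the standard Widder-style proof and is correct in outline: differentiation under the integral (with the correct domination $t^ne^{-xt}\le C_ne^{-(x_0/2)t}$) for sufficiency, and for necessity the reduction to the Hausdorff moment problem on arithmetic lattices via the mean-value theorem for finite differences, the substitution $s=e^{-h\tau}$, dyadic refinement, and Helly selection with the tail estimate $\int_{\tau>R}e^{-x\tau}\,d\gamma_h\le e^{-(x-x')R}f(x')$ to prevent escape of mass---and you rightly isolate tightness and the interchange of the vague limit with the transform as the two points needing care. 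What the paper's citation buys is brevity, appropriate since the result is only used as a black box in Lemma \ref{lem-K}; what your proof buys is self-containedness, at the cost of importing Hausdorff's moment theorem (your fallback via Bernstein polynomials is the standard one). Three small remarks: (a) injectivity of the Laplace transform is not needed in your consistency step---once the representation holds on the dense set of dyadic lattice points, continuity of both sides in $x$ suffices; (b) in the lattice step the pushed-forward measure may carry an atom at $s=0$ (i.e.\ $\tau=+\infty$), which spoils the representation only at the single point $k=1$ of each lattice, harmless under your refinement since any fixed dyadic $x$ satisfies $x\ge 2h_m$ eventually, but worth stating; (c) you correctly flag that the paper's definition of complete monotonicity uses \emph{strict} inequalities, under which the lemma as literally stated fails in the sufficiency direction (a single jump of $\alpha$ at $t=0$ gives a constant $f$), so your added hypothesis that $d\alpha$ charges $(0,\infty)$ is the right repair; this looseness is the paper's, not yours.
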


\subsection{Forward problem}
Let $L^2(\Omega)$ be a usual $L^2$-space with the inner product $\langle\,\cdot\,,\,\cdot\, \rangle$, $H_0^1(\Omega)$, $H^2(\Omega)$, etc. denote the usual Sobolev spaces. By $H^{\alpha}(0,T)$ we denote the fractional Sobolev space
with the norm
$$
\|\phi\|_{H^{\alpha}(0,T)}
= \left( \| \phi\|_{L^2(0,T)}^2
+ \int^T_0 \int^T_0 \frac{\vert \phi(t) - \phi(\tau)\vert^2}
{\vert t - \tau \vert^{1+2\alpha}} d\tau dt \right)^{\frac12}
$$
(e.g., Adams \cite{Ad}).

In order to show the unique existence and properties of the weak solution of the initial-boundary value problem \eqref{eq-gov}, we must give a suitable interpretation of the Caputo derivative not by the pointwise definition. We follow the way proposed by \cite{GLY15} to understand the Caputo derivative in the framework of Sobolev spaces.  For this, we equipe the function space $\mathcal R(J^\alpha)$ with the norm $\|\phi\|_{\mathcal R(J^\alpha)}:= \|J^{-\alpha} \phi\|_{L^2(0,T)}$. Then according to \cite{GLY15}, the function space $\mathcal R(J^\alpha)$ with the norm $\|\cdot\|_{\mathcal R(J^\alpha)}$ becomes a Hilbert space. We follow the notation used in \cite{GLY15} for this Hilbert space, that is, we denote it as $H_\alpha(0,T)$, and we adopt the notation $\|\cdot\|_{H_\alpha(0,T)}$ for the norm $\|\cdot\|_{\mathcal R(J^\alpha)}$.

It is known $H_\alpha(0,T)\subset H^\alpha(0,T)$. More precisely,  we have
$$
H_\alpha(0,T) =
\begin{cases}
\{ \phi \in H^{\alpha}(0,T); \, \phi(0) = 0\},  &\frac12 < \alpha < 1, \\
\left\{ \phi \in H^{\frac12}(0,T);\, \int^T_0 \frac{\vert \phi(t)\vert^2}{t} dt
< \infty\right\}, &\alpha = \frac12, \\
H^{\alpha}(0,T),  &0 < \alpha < \frac12,
\end{cases}
$$
and
$$
\|\phi\|_{H_\alpha(0,T)} =
\begin{cases}
\| \phi\|_{H^{\alpha}(0,T)}, & 0 < \alpha < 1, \, \alpha \ne \frac12, \\
\left( \| \phi \|^2_{H^{\frac12}(0,T)} + \int^T_0 \frac{\vert \phi(t)\vert^2}{t} dt\right)^{\frac12}, &
\alpha = \frac12,
\end{cases}
$$
see e.g., Gorenflo, Luchko and Yamamoto \cite{GLY15}.

Under the above settings, the problem \eqref{eq-gov} should be understood as
\begin{equation}
\label{eq-gov-weak}
\left\{
\begin{alignedat}{2}
&\sum_{j=1}^\ell q_j \partial_t^{\alpha_j} (u - u_0) + A u = F &\quad& \mbox{in $\Omega\times(0,T)$,}\\
&u(x,\cdot)-u_0(x)\in H_\alpha(0,T), &\quad&\mbox{for almost all $x\in\Omega$,}\\
&u(x,t)=0, &\quad& \mbox{$(x,t)\in\partial\Omega\times(0,T)$.}
\end{alignedat}
\right.
\end{equation}
We assume that $u_0 \in L^2(\Omega)$ and $F\in L^2(0,T;L^2(\Omega))$.  Then from \cite{GLY15}, the above problem \eqref{eq-gov-weak} admits a unique weak solution
\begin{equation*}
\label{rslt-regu}
u \in L^2(0,T;  H^2(\Omega) \cap H^1_0(\Omega))
\end{equation*}
such that $u-u_0 \in H_{\alpha}(0,T;L^2(\Omega))$. We refer also to e.g., Kubica, Ryszewska and Yamamoto \cite{KRY}, Kubica and Yamamoto \cite{KY} and Zacher \cite{Za}.

In the case of $F=0$, we can get some further properties such as $t$-analyticity and asymptptic estimate of the solution to the problem \eqref{eq-gov-weak}.
\begin{lem}
\label{lem-analy}
Let $0<\alpha<1$ and $T>0$ be fixed constants. Assuming that $c(\ge0)\in L^\infty(\Omega)$. Then the initial-boundary value problem \eqref{eq-gov} with $u_0\in L^2(\Omega)$  and $F=0$ admits a unique weak solution $u\in C((0,T];H^2(\Omega)\cap H_0^1(\Omega)) $  such that
$$
\|u(t)\|_{H^2(\Omega)} \le C_Tt^{-{\alpha_1}}\|u_0\|_{L^2(\Omega)},\quad t\in(0,T].
$$
In the case of $T=\infty$, there holds the following large time asymptotic estimate
$$
\|u(t)\|_{H^2(\Omega)} \le Ct^{-{\alpha_\ell}}\|u_0\|_{L^2(\Omega)},\quad \mbox{as $t$ being sufficiently large.}
$$
\end{lem}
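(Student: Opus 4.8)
The plan is to proceed by the classical eigenfunction expansion. Since $A$ is symmetric and uniformly elliptic, and because $c\ge0$ together with the homogeneous Dirichlet condition makes $A$ positive definite, its inverse is compact on $L^2(\Omega)$; hence there is an orthonormal basis $\{\varphi_n\}_{n\ge1}$ of $L^2(\Omega)$ consisting of Dirichlet eigenfunctions $A\varphi_n=\lambda_n\varphi_n$ with $0<\lambda_1\le\lambda_2\le\cdots\to\infty$. Writing $u_{0,n}=\langle u_0,\varphi_n\rangle$ and projecting \eqref{eq-gov-weak} onto $\varphi_n$, the problem decouples into the family of scalar multi-term fractional ODEs
$$
\sum_{j=1}^\ell q_j\partial_t^{\alpha_j}(v_n-u_{0,n})+\lambda_n v_n=0,\qquad v_n(0)=u_{0,n},
$$
whose solution I write as $v_n(t)=u_{0,n}S_n(t)$. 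The candidate solution is then $u(x,t)=\sum_{n\ge1}u_{0,n}S_n(t)\varphi_n(x)$, existence and uniqueness being guaranteed by the result of \cite{GLY15} quoted above, so that the whole lemma reduces to uniform-in-$n$ bounds on $\lambda_nS_n$.

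Next I would obtain a workable representation of $S_n$. Applying the Laplace transform (using that $v_n-u_{0,n}$ vanishes at $t=0$) gives
$$
\widehat{S_n}(s)=\frac{\sum_{j=1}^\ell q_js^{\alpha_j-1}}{\sum_{j=1}^\ell q_js^{\alpha_j}+\lambda_n}.
$$
From this I would record two facts. First, $S_n$ is completely monotonic with $S_n(0)=1$: this follows from the Bernstein characterisation in Lemma \ref{lem-bern} together with the closure rules in Lemmas \ref{lem-cm1}--\ref{lem-cm2}, giving in particular $0<S_n(t)\le1$. Second --- and this is the quantitative heart of the argument --- I would establish the uniform estimates
$$
\lambda_nS_n(t)\le \frac{C_T}{t^{\alpha_1}}\ \ (0<t\le T),\qquad \lambda_nS_n(t)\le \frac{C}{t^{\alpha_\ell}}\ \ (t\ \text{large}),
$$
with constants independent of $n$. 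These reflect the familiar feature of the multi-term operator that the highest order $\alpha_1$ governs the short-time behaviour (the dominant balance near $t=0$ is $q_1s^{\alpha_1}\sim\lambda_n$, i.e.\ $\lambda_n\sim t^{-\alpha_1}$), while the lowest order $\alpha_\ell$ governs the decay as $t\to\infty$ (then $q_\ell s^{\alpha_\ell}\sim\lambda_n$ at the small-$s$ scale $s\sim1/t$). I would derive them either by deforming the Bromwich contour in the inverse Laplace transform and balancing the dominant power against $\lambda_n$, or by invoking the known asymptotics of the multinomial Mittag-Leffler function.

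With these estimates in hand the remaining steps are routine. By elliptic regularity $\|w\|_{H^2(\Omega)}\le C\|Aw\|_{L^2(\Omega)}$ for $w\in D(A)$, Parseval's identity yields
$$
\|u(t)\|_{H^2(\Omega)}^2\le C\|Au(t)\|_{L^2(\Omega)}^2=C\sum_{n\ge1}\big(\lambda_nS_n(t)\big)^2u_{0,n}^2\le C\Big(\sup_n\lambda_nS_n(t)\Big)^2\|u_0\|_{L^2(\Omega)}^2,
$$
and inserting the two bounds on $\lambda_nS_n(t)$ produces the stated $t^{-\alpha_1}$ estimate on $(0,T]$ and the $t^{-\alpha_\ell}$ large-time estimate. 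The same domination shows the series converges in $H^2(\Omega)$ uniformly for $t$ in compact subsets of $(0,T]$, giving $u\in C((0,T];H^2(\Omega)\cap H_0^1(\Omega))$; the $t$-analyticity alluded to in the text follows because each $S_n$ extends analytically to a sector about the positive real axis with the same bounds controlling the differentiated series. The main obstacle is the second step: proving the decay of $\lambda_nS_n$ uniformly in $n$, and in particular verifying that the short-time rate is $\alpha_1$ and the long-time rate $\alpha_\ell$, since this is exactly where the multi-term structure --- rather than a single Mittag-Leffler function --- must be handled.
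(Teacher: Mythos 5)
The paper gives no proof of this lemma at all --- it defers entirely to Li--Huang--Yamamoto \cite{LHY}, Sakamoto--Yamamoto \cite{SY11-JMAA} and Li--Imanuvilov--Yamamoto \cite{LIY16} --- and your argument (Dirichlet eigenfunction expansion, Laplace transform of the scalar modes $S_n$, uniform-in-$n$ bounds on $\lambda_n S_n(t)$, then elliptic regularity plus Parseval) is exactly the method used in those references. Your proposal is therefore correct and takes essentially the same approach; the uniform estimates you flag as the ``main obstacle'' are precisely the known multinomial Mittag-Leffler asymptotics established in the cited papers, so invoking them, as you suggest, closes the argument.
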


The proof of the $t$-analyticity and the asymptotic estimate of the solution can be found in  Li, Huang and Yamamoto \cite{LHY}, Sakamoto and Yamamoto \cite{SY11-JMAA} and Li, Imanuvilov and Yamamoto \cite{LIY16}.

\section{Strong positivity property}
\label{sec-sp}
In this section, we will show the strong positivity property which is one of remarkable properties of fractional diffusion equations, which asserts that if the initial state of a solution to a homogeneous equation is positive for any $x\in\Omega$, then the solution is strongly positive in the whole domain.
For this, in the Section \ref{sec-sub}, we will establish a subordination principle to solution of parabolic equation for the multi-term time-fractional diffusion equation. As an application of this principle, we will finish the proof of the strong positivity of the solution in the Section \ref{sec-sp}.

\subsection{Subordination principle}
\label{sec-sub}
Letting $F=0$, $c\ge0$ and $u_0\in L^2(\Omega)$. Then from the $t$-analyticity of the solution to the problem \eqref{eq-gov}, we see from Lemma \ref{lem-analy} that the solution $u: (0,T)\to H^2(\Omega)\cap H_0^1(\Omega)$ can be analytically extended to $t\in(0,\infty)$, which allows us to employ the Laplace transforms on both sides of the equation in \eqref{eq-gov}.
For this, by taking Laplace transforms with respect to the variable $t$ on both sides of the equation in \eqref{eq-gov}, and noting the formula
$$
\widehat{\partial_t^\alpha\phi}(s) = s^{\alpha}\widehat \phi(s) - s^{\alpha-1} \phi(0),
$$
see e.g., Kubica, Ryszewska and Yamamoto \cite{KRY}, we see that
\begin{equation}
\label{eq-gov-lap}
\left\{
\begin{alignedat}{2}
&\sum_{j=1}^\ell q_js^{\alpha_j}\widehat u(s) + A(x)\widehat u(s)=\sum_{j=1}^\ell q_js^{\alpha_j-1}u_0 &\quad&\mbox{in }\Omega,\\
&\widehat u(s)=0 &\quad&\mbox{on } \partial\Omega,\quad \Re s>0.
\end{alignedat}
\right.
\end{equation}
We further use Fourier-Mellin formulation (e.g., Theorem 4.3 in Schiff \cite{S99}) for the inversion Laplace transform to derive
\begin{equation*}
u(t)=\frac{1}{2\pi i}\int_{s_0-i\infty}^{s_0+i\infty}  \widehat u(s)  e^{st} ds, \quad s_0>0.
\end{equation*}

Moreover, one can shift the path of integration into the integral contour $\gamma(\theta_0)$, that is
\begin{equation*}
\label{eq-fm}
u(t)=\int_{\gamma(\theta_0)}\widehat{u}(s)e^{st}ds,\quad t\in(0,T),
\end{equation*}
where the contour $\gamma(\theta_0)$ is consist of the following three parts
\begin{enumerate}[1)]
\item $\arg s = -\theta_0, \quad |s|\ge1$;
\item $|\arg s| \le \theta_0,\quad |s|=1$;
\item $\arg s = \theta_0,\quad\ \ \, |s|\ge1$.
\end{enumerate}
In fact, we have the following lemma to ensure the above statement.
\begin{lem}
\label{lem-u-integ}
Let $u$ be the solution to the problem \eqref{eq-gov} with $F=0$ and $u_0\in L^2(\Omega)$. Then its Laplace transform $\widehat u$ can be analytically extended to the sector $\{s\in\mathbb C\setminus \{0\}; |\arg s|\le\theta_0\}$. Moreover, $u$ admits the following integral representation
\begin{equation*}
\label{eq-u-integ}
u(t)=\int_{\gamma(\theta_0)} \widehat u(s) e^{st} ds,  0<t\le T.
\end{equation*}
\end{lem}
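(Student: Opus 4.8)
The plan is to justify the contour deformation in two steps: first establish the analytic continuation of $\widehat{u}(s)$ to the sector $\{|\arg s|\le\theta_0\}\setminus\{0\}$ together with quantitative decay estimates, and then invoke Cauchy's theorem to deform the vertical Bromwich line $\Re s = s_0$ onto the contour $\gamma(\theta_0)$.

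For the analytic continuation, I would start from the Laplace-transformed elliptic problem \eqref{eq-gov-lap}. Writing $P(s):=\sum_{j=1}^\ell q_j s^{\alpha_j}$, the resolvent equation reads $(P(s)I + A)\widehat{u}(s) = P(s)s^{-1}u_0$, so that $\widehat{u}(s) = P(s)s^{-1}(P(s)I+A)^{-1}u_0$. Since $A$ is a self-adjoint, nonnegative, strictly elliptic operator with discrete spectrum $0\le\lambda_1\le\lambda_2\le\cdots$, the operator $(P(s)I+A)^{-1}$ is well-defined and analytic in $s$ precisely where $-P(s)$ avoids the spectrum of $A$. The key observation is that for $|\arg s|<\pi$ one has $\arg s^{\alpha_j}=\alpha_j\arg s$ with $\alpha_j<1$, so each term $q_j s^{\alpha_j}$ has argument strictly smaller in modulus than $|\arg s|$; choosing $\theta_0\in(\pi/2,\pi)$ close enough to $\pi$ so that $\alpha_1\theta_0<\pi$, I can guarantee $\Re P(s)\ge c|s|^{\alpha_\ell}>0$ on the sector, whence $P(s)$ stays in the right half-plane and $-P(s)$ never meets $[0,\infty)\supset\sigma(A)$. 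This gives analyticity of $\widehat{u}$ on the open sector and, via the spectral estimate
\[
\|(P(s)I+A)^{-1}\|_{L^2\to L^2}\le \frac{C}{|P(s)|},
\]
the bounds $\|\widehat{u}(s)\|_{L^2(\Omega)}\le C|s|^{-1}\|u_0\|_{L^2(\Omega)}$ and $\|A\widehat{u}(s)\|_{L^2(\Omega)}\le C|s|^{-1}|P(s)|\,\|u_0\|_{L^2(\Omega)}$ on $\{|\arg s|\le\theta_0\}$. These are exactly the decay rates needed below.

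To carry out the deformation, I would apply Cauchy's integral theorem on the closed contour bounded by the vertical segment $\Re s=s_0$, $|\Im s|\le R$, two circular arcs of radius $R$ connecting the line to the rays $\arg s=\pm\theta_0$, and the corresponding pieces of $\gamma(\theta_0)$; since $\widehat{u}(s)e^{st}$ is analytic in the swept region, the contour integral vanishes. It then remains to show the contributions of the large arcs tend to $0$ as $R\to\infty$. On these arcs $\Re(st)<0$ because $|\arg s|>\pi/2$, so $|e^{st}|=e^{t\Re s}$ decays exponentially along the rays while on the arcs one combines $\Re s \le R\cos\theta_0<0$ with the polynomial bound $\|\widehat{u}(s)\|_{L^2}\le C|s|^{-1}$; a Jordan-lemma type estimate then kills the arc integrals. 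I expect the main technical obstacle to be precisely this arc estimate for $t$ near $0$, where the exponential gain from $e^{st}$ degenerates: one must exploit the genuine decay $|s|^{-1}$ of $\widehat{u}$ (and, if needed, interpolate the $H^2$ estimate) rather than relying solely on $e^{st}$, and verify that the integral $\int_{\gamma(\theta_0)}\|\widehat{u}(s)\|\,|e^{st}|\,|ds|$ converges absolutely so that the representation is valid for every $t\in(0,T]$. Once absolute convergence on $\gamma(\theta_0)$ and the vanishing of the arcs are in hand, the Fourier–Mellin inversion formula transfers verbatim to $\gamma(\theta_0)$, completing the proof.
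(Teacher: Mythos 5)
Your overall architecture coincides with the paper's proof: write $\widehat u(s)=s^{-1}Q(s)\,(Q(s)I+A)^{-1}u_0$ (the paper expresses the same thing through the eigenfunction expansion $\sum_{n}\langle u_0,\varphi_n\rangle\varphi_n\,\frac{s^{-1}Q(s)}{Q(s)+\lambda_n}$), show that $-Q(s)$ avoids $\sigma(A)\subset[0,\infty)$ on the sector so that $\widehat u$ extends analytically, and then deform the Bromwich line onto $\gamma(\theta_0)$ by Cauchy's theorem together with an extended Jordan lemma and the decay of $\widehat u$. However, there is a genuine error in the step on which all your quantitative estimates rest: the choice of $\theta_0$. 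You take $\theta_0\in(\pi/2,\pi)$ ``close enough to $\pi$ so that $\alpha_1\theta_0<\pi$'' and claim this gives $\Re P(s)\ge c|s|^{\alpha_\ell}>0$ on $\{|\arg s|\le\theta_0\}$. That implication is false: the condition $\alpha_j|\arg s|<\pi$ only keeps each term $q_js^{\alpha_j}$ off the negative real axis, whereas positivity of its real part requires $\alpha_j|\arg s|<\pi/2$. Concretely, take $\ell=1$, $\alpha_1=0.9$, $\theta_0=0.95\pi$ (permitted by your condition, since $\alpha_1\theta_0=0.855\pi<\pi$); on the ray $\arg s=\theta_0$ one has $\Re P(s)=q_1|s|^{0.9}\cos(0.855\pi)<0$ for every $|s|>0$, so your lower bound, and with it your justification of the resolvent estimate $\|(P(s)I+A)^{-1}\|\le C/|P(s)|$, collapses. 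The remark that $\alpha_j<1$ shrinks the argument of $s^{\alpha_j}$ is not enough: it must be shrunk below $\pi/2$, not merely below $\pi$.

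The repair is exactly the constraint the paper imposes: choose $\frac{\pi}{2}<\theta_0<\min\{\frac{\pi}{2\alpha_1},\pi\}$, a nonempty interval because $\alpha_1<1$. Then $\alpha_j|\arg s|\le\alpha_1\theta_0<\frac{\pi}{2}$ for all $j$, every term $q_js^{\alpha_j}$ lies in the open right half-plane, and your bound $\Re P(s)\ge q_\ell|s|^{\alpha_\ell}\cos(\alpha_\ell\theta_0)$ is valid; since $\Re P(s)\ge 0$ implies $\mathrm{dist}(-P(s),[0,\infty))=|P(s)|$, the resolvent estimate and the decay $\|\widehat u(s)\|_{L^2(\Omega)}\le C|s|^{-1}\|u_0\|_{L^2(\Omega)}$ follow, and the contour deformation and arc estimates you outline (which match the paper's appeal to the extended Jordan lemma and Cauchy's theorem) go through. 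Alternatively, you could retain a larger $\theta_0$ and argue as the paper actually does, via $\Im(Q(s)+\lambda_n)=\sum_{j=1}^{\ell}|s|^{\alpha_j}\sin(\alpha_j\theta)\ne 0$ for $0<|\arg s|\le\theta_0$, which also keeps $-Q(s)$ off the spectrum; but then the uniform resolvent bound must be reproved by estimating the distance to the spectrum, since when $\Re Q(s)<0$ that distance is only $|\Im Q(s)|$ rather than $|Q(s)|$, and your written argument does not supply this.
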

\begin{proof}
From \eqref{eq-gov-lap} and noting that the Dirichlet eigensystem  $\{\varphi_n\}$ of the operator $A(x)$ forms an othornormal basis of $L^2(\Omega)$, it follows that
\begin{equation*}
\widehat{u}(s)=\sum_{n=1}^{\infty}\langle u_0,\varphi_n\rangle\varphi_{n}\frac{s^{-1}Q(s)}{Q(s)+\lambda_n},\quad \Re s>0.
\end{equation*}
It is not difficult to see that the series on the right-hand side of the above equation can by analytically extended to the sector
$\{s\in\mathbb{C}\setminus\{0\};\vert{\arg s}\vert \le \theta_0\}$, where $\theta_0$ is such that $\frac{\pi}{2} < \theta_0 < \min\{\frac{\pi}{2\alpha_1},\pi\}.$ Indeed, for any $\theta=\vert{\arg s}\vert \le \theta_0,$ we assert that $0 < \theta\alpha_j \le \theta_0\alpha_j < \frac{\pi}{2\alpha_1}\alpha_j < \frac{\pi}{2}$, hence that
$$
\Im (Q(s)+\lambda_n)=\sum_{j=1}^{\ell}\vert{s}\vert^{\alpha_j}\sin{\alpha_j\theta}>0,
$$
from which we further see that
$$
\sum_{n=1}^{\infty} \langle u_0,\varphi_n \rangle \varphi_n\frac{s^{-1}Q(s)}{Q(s)+\lambda_n},\quad \Re s>0
$$
can be analytically extended to the sector
$\{s\in\mathbb{C}\setminus\{0\};\vert{\arg s}\vert \le \theta_0\}$. This means that the Laplace transform $\widehat{u}(s)$ of the solution $u$ to the problem \eqref{eq-gov} with $F=0$ admits the same analytical extention on the sector $\{s\in\mathbb{C}\setminus\{0\};\vert{\arg s}\vert \le \theta_0\}$. We still denote the extention as $\widehat{u}(s)$ if there is no conflict occurs. We now fix $s_0>0$ and we employ the Fourier-Mellin formula to derive
$$
u(t) = \frac1{2\pi i}\int_{s_0-i\infty}^{s_0+i\infty} \widehat{u}(s)e^{st}ds,\quad t\in(0,T).
$$
From the Jordan lemma (e.g., Wei and Zhu \cite{WZ97}) and the Cauchy theorem (e.g., Theorem 1.1 in Chapter 2 in Stein and Shakarchi \cite{St}), we assert that the above path of integration can be shifted to the contour $\gamma(\theta)$, that is
\begin{equation}
\label{eq-fm}
u(t) =  \frac1{2\pi i}\int_{\gamma(\theta_0)}\widehat{u}(s)e^{st}ds,\quad t\in(0,T).
\end{equation}
This completes the proof of the lemma.
\end{proof}

In order to constructing the subordination principle to a solution of parabolic equation for the problem \eqref{eq-gov}, we consider
\begin{equation}
\label{eq-heat}
\left\{
\begin{alignedat}{2}
&\partial_t w+A(x)w=0 &\quad&\mbox{in }\Omega\times(0,\infty),\\
&w(x,0)=u_0 &\quad&\mbox{in }\Omega, \\
&w(x,t)=0 &\quad& \mbox{on }\partial\Omega\times(0,+\infty).
\end{alignedat}
\right.
\end{equation}
Then using the Laplace transform argument, we rephrase the above problem \eqref{eq-heat} as follows
\begin{equation}
\label{eq-heat-lap}
\left\{
\begin{alignedat}{2}
&\eta \widehat w(\eta) + A(x)\widehat w(\eta) =  u_0&\quad&\mbox{in }\Omega,\\
&\widehat w(\eta)=0&\quad& \mbox{on }\partial\Omega,\quad \Re \eta>0.
\end{alignedat}
\right.
\end{equation}
Now we divide both sides of the equation in \eqref{eq-gov-lap} by $\sum_{j=1}^\ell q_js^{\alpha_j-1}$ and we obtain
\begin{equation*}
\left\{
\begin{alignedat}{2}
&\sum_{j=1}^\ell q_js^{\alpha_j}\left[\frac{\widehat u(s)}{\sum_{j=1}^\ell q_js^{\alpha_j-1}}\right] + A(x)\left[\frac{\widehat u(s)}{\sum_{j=1}^\ell q_js^{\alpha_j-1}}\right] = u_0&\quad&\mbox{in }\Omega,\\
& \frac{\widehat u(s)}{\sum_{j=1}^\ell q_js^{\alpha_j-1}}=0&\quad&\mbox{on }\partial\Omega.
\end{alignedat}
\right.
\end{equation*}
Then letting $\eta=\sum_{j=1}^\ell q_js^{\alpha_j}$ and recalling the uniqueness of the solution to the problem \eqref{eq-heat-lap} we must have $\widehat w(\eta)=\frac{\widehat u(s)}{\sum_{j=1}^\ell q_js^{\alpha_j-1}}$, that is

\begin{equation}\label{eq-u-w}
\widehat u(s)=s^{-1}Q(s) \widehat w(Q(s)) \mbox{ with }Q(s):=\sum_{j=1}^\ell q_js^{\alpha_j}, \quad |\arg s|\le \theta_0.
\end{equation}
On the basis of the above relation \eqref{eq-u-w} and Lemma \ref{lem-u-integ}, we can establish the following subordinate principle.
\begin{lem}
\label{lem-sub}
Suppose that $u$ and $w$ are the solutions to the problem \eqref{eq-gov} with $F=0$ and $u_0\in L^2(\Omega)$ and the problem \eqref{eq-heat} respectively. Then $u$ and $w$ admit the following relation
\begin{equation}
\label{eq-sub}
u(t)=\int_0^\infty w(\tau) \left(\frac{1}{2\pi i} \int_{s_0-i\infty}^{s_0+i\infty}  s^{-1}Q(s)  e^{st- Q(s)\tau } ds\right) d\tau,
 \quad s_0>0.
\end{equation}
\end{lem}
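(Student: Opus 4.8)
The plan is to combine the Fourier--Mellin representation of $u$ from Lemma \ref{lem-u-integ} with the subordination relation \eqref{eq-u-w}, re-express $\widehat w$ as a genuine Laplace integral of $w$, and interchange the two integrations. Fixing $s_0>0$, I start from
\[
u(t)=\frac1{2\pi i}\int_{s_0-i\infty}^{s_0+i\infty}\widehat u(s)\,e^{st}\,ds
\]
and substitute $\widehat u(s)=s^{-1}Q(s)\,\widehat w(Q(s))$. The decisive preliminary point is that $\Re Q(s)>0$ along the line $\Re s=s_0$, and in fact on the whole sector $|\arg s|\le\theta_0$: writing $s=|s|e^{i\arg s}$ with $|\arg s|<\tfrac\pi2$, the bound $0<\alpha_j<1$ forces $\alpha_j|\arg s|<\tfrac\pi2$, whence $\Re s^{\alpha_j}=|s|^{\alpha_j}\cos(\alpha_j\arg s)>0$ and $\Re Q(s)=\sum_{j=1}^\ell q_j\,\Re s^{\alpha_j}>0$. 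This positivity is what allows me to write $\widehat w(Q(s))=\int_0^\infty w(\tau)e^{-Q(s)\tau}\,d\tau$, since the identity $\widehat w(\eta)=\int_0^\infty w(\tau)e^{-\eta\tau}\,d\tau$ holds throughout $\Re\eta>0$ and $\|w(\tau)\|_{L^2(\Omega)}\le\|u_0\|_{L^2(\Omega)}$. Inserting this and exchanging the $s$- and $\tau$-integrations yields precisely \eqref{eq-sub}, so the whole content of the lemma reduces to justifying this single interchange.

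The hard part is exactly this Fubini step. On the vertical line the integrand decays only like $|\Im s|^{-1}$---one checks $|\widehat u(s)|\lesssim|\Im s|^{-1}$ from $\Re Q(s)\gtrsim|\Im s|^{\alpha_1}$ together with $|s^{-1}Q(s)|\lesssim|\Im s|^{\alpha_1-1}$---so the representation is only conditionally convergent and Tonelli's theorem does not apply directly. To gain genuine absolute convergence I would first deform the path to the contour $\gamma(\theta_0)$ of Lemma \ref{lem-u-integ}, with $\tfrac\pi2<\theta_0<\min\{\tfrac\pi{2\alpha_1},\pi\}$. On the rays $\arg s=\pm\theta_0$ the factor $e^{st}$ now decays exponentially (since $\Re(st)=t|s|\cos\theta_0<0$), while $\Re Q(s)$ still grows like $|s|^{\alpha_1}$ because $\alpha_1\theta_0<\tfrac\pi2$ keeps $\cos(\alpha_1\theta_0)>0$. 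Consequently the majorant
\[
\int_{\gamma(\theta_0)}|s^{-1}Q(s)|\,e^{\Re(st)}\Bigl(\int_0^\infty e^{-\Re Q(s)\tau}\,d\tau\Bigr)|ds|=\int_{\gamma(\theta_0)}\frac{|Q(s)|}{|s|\,\Re Q(s)}\,e^{\Re(st)}\,|ds|
\]
is finite: the ratio $|Q(s)|/\Re Q(s)$ stays bounded on $\gamma(\theta_0)$ (it tends to $1/\cos(\alpha_1\theta_0)$ for large $|s|$), while $e^{\Re(st)}$ decays exponentially along the rays, and only the bound $\|w(\tau)\|_{L^2(\Omega)}\le\|u_0\|_{L^2(\Omega)}$ was used. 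Hence Fubini is legitimate on $\gamma(\theta_0)$ and produces the representation with the inner contour $\gamma(\theta_0)$ in place of the vertical line.

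It then remains to restore the vertical line. For each fixed $\tau>0$ the inner integrand $s^{-1}Q(s)e^{st-Q(s)\tau}$ is analytic in the cut sector, and the extra factor $e^{-Q(s)\tau}$---of modulus $e^{-\Re Q(s)\tau}$ with $\Re Q(s)\gtrsim|s|^{\alpha_1}$---provides enough decay for Cauchy's theorem and Jordan's lemma to shift $\gamma(\theta_0)$ back to $\Re s=s_0$ without altering the value of the inner integral, exactly as in the proof of Lemma \ref{lem-u-integ}. This identifies the kernel on $\gamma(\theta_0)$ with the one in \eqref{eq-sub} and completes the argument. I expect the only genuinely technical points to be the uniform lower bound $\Re Q(s)\ge c|s|^{\alpha_1}$ on the rays and its interplay with the behaviour of $w(\tau)$; once $\Re Q(s)>0$ and this growth are secured, the remaining manipulations are routine.
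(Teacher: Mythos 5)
Your proof is correct and follows essentially the same route as the paper: insert the subordination relation \eqref{eq-u-w} into the contour representation \eqref{eq-fm} of Lemma \ref{lem-u-integ}, write $\widehat w(Q(s))$ as a genuine Laplace integral of $w$, and exchange the order of integration. In fact you supply two justifications the paper glosses over --- the absolute-convergence bound on $\gamma(\theta_0)$ (using $\Re Q(s)\gtrsim |s|^{\alpha_1}$ and the exponential decay of $e^{st}$ on the rays) that legitimizes the Fubini step, and the final deformation of the inner contour back to the vertical line $\Re s=s_0$, which is needed because the paper's proof stops with the inner integral on $\gamma(\theta_0)$ while the statement \eqref{eq-sub} is written on the vertical line.
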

\begin{proof}
Noting the relation \eqref{eq-u-w}, we have
$$
\widehat{u}(s)=s^{-1}Q(s)\widehat{w}(Q(s)),\quad s\ne0,  \vert{\arg s}\vert \le \theta_0.
$$
which combined with \eqref{eq-fm} further implies
$$
u(t)= \frac{1}{2\pi i} \int_{\gamma(\theta_0)}s^{-1}Q(s)\widehat{w}(Q(s))e^{st}ds,\quad t\in(0,T).
$$
Finally, from the definition of the Laplace transform, it follows that
\begin{equation*}
\begin{aligned}
u(t)&=\frac{1}{2\pi i} \int_{\gamma(\theta_0)}  s^{-1} Q(s) \left(\int_0^\infty w(\tau) e^{-\tau Q(s)} d\tau\right) e^{st} ds,\\
&=\int_0^\infty w(\tau) \left(\frac{1}{2\pi i} \int_{\gamma(\theta_0)}  s^{-1}Q(s)  e^{st- Q(s)\tau } ds\right) d\tau,
 \quad 0<t\le T,
\end{aligned}
\end{equation*}
where the exchange of the order of the integrals is justified by the Fubini lemma. We finish the proof of the lemma.
\end{proof}

\subsection{Proof of Theorem \ref{thm-sp}}
\label{sec-sp}
In the above subsection, we have proved the subordination principle which combined the solution to the multi-term time-fractional diffusion equation with the solution of the corresponding parabolic type equation involving the same initial and boundary conditions. In order to show the strong positivity property of the solution, it remains to show the following lemma.
\begin{lem}
\label{lem-K}
Let $\alpha_j\in (0,1)$ and $q_j>0$ be constants. Then the kernel function in \eqref{eq-sub}
$$
K_\alpha(t,\tau) =
\frac{1}{2\pi i} \int_{s_0-i\infty}^{s_0+i\infty}  s^{-1}Q(s)  \exp\{st- Q(s)\tau \} ds
$$
is nonnegative for a.e. $t,\tau>0$, where $Q(s)=\sum_{j=1}^\ell q_js^{\alpha_j}$.
\end{lem}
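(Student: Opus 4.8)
The plan is to read $K_\alpha(\cdot,\tau)$, for each fixed $\tau>0$, as the inverse Laplace transform (along the Bromwich line $\Re s=s_0$) of the function
\[
\Phi_\tau(s):=s^{-1}Q(s)\,e^{-Q(s)\tau},\qquad s>0 .
\]
Since $K_\alpha(t,\tau)$ is by construction the function whose Laplace transform in $t$ equals $\Phi_\tau(s)$, the nonnegativity assertion reduces, via the Bernstein theorem (Lemma \ref{lem-bern}) together with uniqueness of the inverse Laplace transform, to the single claim that $\Phi_\tau$ is completely monotone on $(0,\infty)$ in the variable $s$. Indeed, once $\Phi_\tau$ is c.m., Lemma \ref{lem-bern} furnishes a non-decreasing $\mu_\tau$ with $\Phi_\tau(s)=\int_0^\infty e^{-st}\,d\mu_\tau(t)$; comparing this with the defining Bromwich representation of $K_\alpha$ and invoking uniqueness forces $d\mu_\tau(t)=K_\alpha(t,\tau)\,dt$, whence $K_\alpha(t,\tau)\ge 0$ for a.e.\ $t$.

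It therefore remains to verify complete monotonicity of the two factors of $\Phi_\tau$ and then to multiply. First, $s^{-1}Q(s)=\sum_{j=1}^\ell q_j s^{\alpha_j-1}$. Each power $s^{\alpha_j-1}$ is c.m.\ because $\alpha_j-1\in(-1,0)$ renders every factor of $(-1)^n\frac{d^n}{ds^n}s^{\alpha_j-1}=|(\alpha_j-1)\cdots(\alpha_j-n)|\,s^{\alpha_j-1-n}$ positive; as the $q_j$ are positive, Lemma \ref{lem-cm1} shows $s^{-1}Q(s)$ is c.m. Second, I write $\tau Q(s)=\int_0^s g(t)\,dt$ with $g(t)=\tau\sum_{j=1}^\ell q_j\alpha_j t^{\alpha_j-1}$; the integrand $g$ is a nonnegative combination of the c.m.\ functions $t^{\alpha_j-1}$ and hence c.m.\ by Lemma \ref{lem-cm1}. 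Applying Lemma \ref{lem-cm2} with $a=0$, $b=1$ and the c.m.\ outer function $f(y)=e^{-y}$ then shows that $e^{-\tau Q(s)}=f\!\left(\int_0^s g(t)\,dt\right)$ is c.m. Finally, the product of the two c.m.\ factors is c.m.\ by Lemma \ref{lem-cm1}, so $\Phi_\tau$ is c.m.\ and the proof concludes as in the previous paragraph.

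The step I expect to demand the most care is the complete monotonicity of $e^{-\tau Q(s)}$: the explicit special case quoted in Lemma \ref{lem-cm2} treats only a single power $f(ax^\alpha+b)$, whereas $Q$ is a \emph{sum} of distinct fractional powers, so one must invoke the general composition form $f\!\left(a+b\int_0^x g\right)$ and supply the representation $\tau Q(s)=\int_0^s g$ with $g$ c.m. The second delicate point is bookkeeping at the transform level: I must ensure that the nonnegative measure produced by Lemma \ref{lem-bern} is genuinely absolutely continuous with density $K_\alpha(\cdot,\tau)$. This follows from uniqueness of the Laplace transform once one checks that, for $\tau>0$, the integrand decays fast along vertical lines (on $s=s_0+iy$ one has $\Re Q(s)\to+\infty$ since $\cos(\alpha_j\tfrac{\pi}{2})>0$, so $e^{-Q(s)\tau}$ dominates the polynomial growth of $s^{-1}Q(s)$), making the Bromwich integral define an honest locally integrable function. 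The remaining manipulations are routine.
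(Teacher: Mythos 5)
Your proof is correct and takes essentially the same route as the paper's: both reduce the claim, via Bernstein's theorem (Lemma \ref{lem-bern}) and Laplace-transform uniqueness, to showing that $s\mapsto s^{-1}Q(s)e^{-Q(s)\tau}$ is completely monotone, and both verify this with Lemmas \ref{lem-cm1} and \ref{lem-cm2}. The only differences are cosmetic: the paper factors $e^{-Q(s)\tau}=\prod_{k=1}^{\ell}e^{-\tau q_k s^{\alpha_k}}$ and applies the special case $f(ax^{\alpha}+b)$ of Lemma \ref{lem-cm2} to each factor before using closure under products, whereas you invoke the general composition form $f\left(a+b\int_0^x g\right)$ once, and you spell out the inversion/decay bookkeeping that the paper leaves implicit.
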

\begin{proof}
It is not difficult to check that the Laplace transform of the function $K_{\alpha}(\cdot,\tau)$ can be calculated as follows
$$
\begin{aligned}
\mathcal L\{K_{\alpha}(\cdot,\tau);s\}&=s^{-1}Q(s)\exp\{-Q(s)\tau\},\\&=\sum_{j=1}^{\ell}q_js^{\alpha_j-1}\prod_{k=1}^{\ell}\exp\{-\tau q_{k}s^{\alpha_k} \}.
\end{aligned}
$$
Therefore, from  the Bernstein theorem in Lemma \ref{lem-bern}, it is sufficient to show that the Laplace transform
$\mathcal L\{K_{\alpha}(\cdot,\tau);s\}$
is completely monotonic,
that is
$$
(-1)^{n}\partial_{s}^{n}\mathcal L\{K_{\alpha}(\cdot,\tau);s\}\geq0,\quad \forall s>0.
$$
Indeed, from  \eqref{eq-sub} in Lemmas \ref{lem-cm1} and \ref{lem-cm2}, it follows that $s^{\alpha_j-1}$,
$\exp\{-\tau q_k s^{\alpha_k} \}$ are completely monotonic functions ,
where $\alpha_j\in(0,1),q_j,\tau>0$.
Consequently,
we derive that
$$
\sum_{j=1}^{\ell}q_{j}s^{\alpha_j-1}\prod_{k=1}^{\ell} \exp\{-\tau q_{k}s^{\alpha_k} \}
$$
is also completely monotonic function.
Finally,
in view of the Bernstein theorem in Lemma \ref{lem-bern}, we must have
$$
K_{\alpha}(t,\tau)\geq0,\quad \forall t,\tau>0.
$$
We finish the proof of the lemma.
\end{proof}

Now we are ready to give the proof of the first main result.
\begin{proof}[Proof of Theorem \ref{thm-sp}]
From Lemma \ref{lem-sub} and noting the definition of $K_\alpha(t,\tau)$, it follows that
$$
u(t)=\int_0^\infty w(\tau) K_\alpha(t,\tau) d\tau,\quad t\in(0,T).
$$
On the other hand, in view of the strong maximum principle for the parabolic equation, we see that the solution $w$ to the problem \eqref{eq-heat} is strongly positive for any $(x,t)$ in $\Omega\times(0,T)$, which combined with Lemma \ref{lem-K} further implies that $u>0$ in $\Omega\times(0,T)$. The proof of the theorem is complete.
\end{proof}

\section{Inverse source problem}
\label{sec-isp}

For proving the second main result, in Section \ref{sec-ii} we will give an integral identity which reflects a corresponding relation of varied of the unknown source functions with the additional observations, and in Section \ref{sec-isp-proof}

\subsection{Integral identity}
\label{sec-ii}
We assume $u$ solves the following initial-boundary value problem \eqref{eq-gov} with $u_0=0$ and $F(x,t)=g(t)f(x)$. Practically, $g(t)$ is always unknown, and we focus on the unique determination of the source term $g(t)$ from the non local observation data
$$
\int_{\omega}u(x,t)dx, \mbox{ where $\omega$ is a nonempty and open subset of $\Omega$}.
$$
We have
\begin{lem}
\label{lem-ii}
Assume $u$ is a solution to the problem \eqref{eq-gov} with $F=g(t)f(x)$ and $u_0=0$. Then the integral type observation $\int_\omega u(x,t) dx$ can be represented by following convolution form.
$$
\sum_{j=1}^\ell q_jJ^{1-\alpha_j} \left(\int_\omega u(x,t) dx \right)
=\int_{0}^{t}g(t-\tau) \left(\int_\omega v(x,\tau)dx \right) d\tau,\quad t\in(0,T),
$$
where $v$ satisfies the following initial-boundary value problem
\begin{equation}
\label{eq-homo}
\left\{
\begin{alignedat}{2}
&\sum_{j=1}^\ell q_j\partial_t^{\alpha_j} v + A(x)v = 0  &\quad& \mbox{ in }\Omega\times(0,T),\\
&v(x,0)=f(x) &\quad& \mbox{ in }\Omega,\\
&v(x,t)=0 &\quad& \mbox{ on } \partial\Omega\times(0,T).
\end{alignedat}
\right.
\end{equation}
\end{lem}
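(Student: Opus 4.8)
The plan is to represent $u$ as a time-convolution of the source profile $g$ with a fixed spatio-temporal kernel, and then to pull the Riemann--Liouville integrals $J^{1-\alpha_j}$ through that convolution by means of Lemma~\ref{lem-convo-RL}, turning the kernel back into the solution $v$ of the homogeneous problem~\eqref{eq-homo}. The device that links the two problems is the Laplace transform in $t$: writing $\widehat{\cdot}$ for the transform and $Q(s)=\sum_{j=1}^\ell q_js^{\alpha_j}$, I would introduce the auxiliary function $\Phi$ determined by $\widehat\Phi(x,s)=(Q(s)+A)^{-1}f$, i.e.\ the resolvent of the elliptic operator applied to $f$.

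First I would transform the two initial-boundary value problems. For $u$, since $u_0=0$ the boundary term in $\widehat{\partial_t^{\alpha_j}u}(s)=s^{\alpha_j}\widehat u(s)-s^{\alpha_j-1}u(x,0)$ disappears and the transformed problem reads $(Q(s)+A)\widehat u(s)=\widehat g(s)f$ with $\widehat u(s)=0$ on $\partial\Omega$; expanding in the Dirichlet eigenbasis $\{\varphi_n\}$ of $A$ as in Lemma~\ref{lem-u-integ} (the operator $Q(s)+A$ is invertible for $\Re s>0$ because $\lambda_n>0$ and $\Re Q(s)>0$) this gives $\widehat u(s)=\widehat g(s)\,\widehat\Phi(s)$, hence by the convolution theorem $u(x,t)=\int_0^t g(t-\tau)\Phi(x,\tau)\,d\tau$. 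For $v$, whose initial value is $f$, the same transform yields $(Q(s)+A)\widehat v(s)=s^{-1}Q(s)f$, so that $\widehat v(s)=s^{-1}Q(s)\widehat\Phi(s)=\sum_{j=1}^\ell q_js^{\alpha_j-1}\widehat\Phi(s)$; recalling $\mathcal L\{J^{1-\alpha_j}\Phi\}(s)=s^{\alpha_j-1}\widehat\Phi(s)$ and inverting, I obtain $v=\sum_{j=1}^\ell q_jJ^{1-\alpha_j}\Phi$. Now I would apply $\sum_{j=1}^\ell q_jJ^{1-\alpha_j}$ to the convolution $u=g*\Phi$ and invoke Lemma~\ref{lem-convo-RL} to move each $J^{1-\alpha_j}$ inside the convolution, giving
$$
\sum_{j=1}^\ell q_jJ^{1-\alpha_j}u=\sum_{j=1}^\ell q_j\bigl(g*J^{1-\alpha_j}\Phi\bigr)=g*\Bigl(\sum_{j=1}^\ell q_jJ^{1-\alpha_j}\Phi\Bigr)=g*v.
$$
Integrating this identity over $x\in\omega$ and interchanging $\int_\omega$ with the time operators by Fubini produces exactly the asserted formula for $\int_\omega u\,dx$ and $\int_\omega v\,dx$.

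The step I expect to be the main obstacle is the rigorous justification of this Laplace-transform framework on the finite interval $(0,T)$. Unlike the source-free setting of Lemma~\ref{lem-analy}, an arbitrary profile $g\in L^2(0,T)$ prevents $u$ from extending analytically in $t$, so I must secure convergence of the transforms, the well-definedness of $\Phi$ through the resolvent series, and the injectivity used when inverting. The clean way around this is causality: every operator involved ($J^{1-\alpha_j}$ and convolution with $g$) is of Volterra type, so values at time $t$ depend only on data on $[0,t]$; extending $g$ by zero beyond $T$ makes all transforms well defined without changing either side for $t\in(0,T)$, and the identity on $(0,T)$ follows. The remaining verifications---term-by-term handling of the eigenfunction series and the Fubini interchanges---are routine given the regularity $u,v\in L^2(0,T;H^2(\Omega)\cap H_0^1(\Omega))$ recorded after~\eqref{eq-gov-weak} together with $g\in L^2(0,T)$.
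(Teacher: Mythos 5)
Your proof is correct, but it reaches the identity by a genuinely different route than the paper. The paper's own proof is very short because it outsources the key representation to Lemma 4.2 of Liu \cite{Liu17}: there one has a Duhamel-type formula $u(\cdot,t)=\int_0^t\mu(t-\tau)v(\cdot,\tau)\,d\tau$ in which the kernel $\mu$ is tied to the unknown source by $\sum_{j=1}^\ell q_jJ^{1-\alpha_j}\mu=g$; applying $\sum_{j=1}^\ell q_jJ^{1-\alpha_j}$ to this formula, using Lemma \ref{lem-convo-RL} to let the fractional integrals fall on the factor $\mu$ (thereby recovering $g$), and integrating over $\omega$ by Fubini finishes the argument. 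You instead rebuild the representation from scratch by Laplace transform, with the $g$-independent kernel $\Phi=\mathcal{L}^{-1}\bigl[(Q(s)+A)^{-1}f\bigr]$: you obtain $u=g*\Phi$ and $v=\sum_{j=1}^\ell q_jJ^{1-\alpha_j}\Phi$, and then the same two ingredients (Lemma \ref{lem-convo-RL} and Fubini) close the proof, with the fractional integrals now landing on $\Phi$ rather than on a $g$-dependent factor. The two computations are Laplace-dual to one another ($\widehat\mu=sQ(s)^{-1}\widehat g$ in the paper versus $\widehat v=s^{-1}Q(s)\widehat\Phi$ in yours), but your version buys something concrete: it is self-contained, and it never requires $g$ to lie in the range of $\sum_{j=1}^\ell q_jJ^{1-\alpha_j}$, i.e.\ it never needs the auxiliary function $\mu$ to exist as a function, since $\Phi$ is built from the resolvent independently of $g$. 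The price is the justification of the Laplace transform for a solution that is not analytic in time because of the rough source; your zero-extension-plus-causality argument is the right fix, and the remaining verifications (the eigenfunction expansion of $(Q(s)+A)^{-1}$ for $\Re s>0$, valid since $\Re Q(s)>0$ and $\lambda_n>0$, and the Fubini interchanges) are routine as you say, up to the minor remark that near $t=0$ the kernel $\Phi$ behaves like $t^{\alpha_1-1}$, so Lemma \ref{lem-convo-RL} should be invoked in its obvious $L^1$-kernel variant rather than literally with both factors in $L^2(0,T)$ --- a caveat that applies equally to the paper's use of $\mu$.
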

\begin{proof}
From Lemma 4.2 in Liu \cite{Liu17}, we obtain that $u$ allows the representation
\begin{equation}
\label{eq-duhamel}
u(\cdot,t)= \int_0^t \mu(t-\tau) v(\cdot, \tau) d\tau,\quad 0<t\le T,
\end{equation}
where $v$ solves the homogeneous problem \eqref{eq-homo} with $f$ as the initial data, and $\mu$ satisfies
\begin{equation}
\label{eq-g}
\sum_{j=1}^\ell q_j J^{1-\alpha_j} \mu(t) = g(t),\quad 0<t\le T.
\end{equation}
Now we multiply $\sum_{j=1}^\ell q_jJ^{1-\alpha_j}$ on both sides of the above equation \eqref{eq-duhamel} and we use Lemma \ref{lem-convo-RL} to derive that
\begin{align*}
\sum_{j=1}^\ell q_j J^{1-\alpha_j} u(x,t)
&=\int_{0}^t v(\cdot,t-\tau) \sum_{j=1}^\ell q_j J^{1-\alpha_j} \mu(\tau) d\tau
\\
&=\int_{0}^{t}g(t-\tau)v(\cdot,\tau) d\tau,\quad t\in(0,T).
\end{align*}
Here in the last equality, we used the relation \eqref{eq-g}. We finally take integration on the subdomain $\omega$, and from the Fubini lemma, we can get the desired result.
\end{proof}

\subsection{Proof of Theorem \ref{thm-isp}}
\label{sec-isp-proof}
In this part, we will use the integral identity established in Lemma \ref{lem-ii} and the strong positivity of the solution in Theorem \ref{thm-sp} to show the uniqueness of the determination of the source term from the integral type observation.
\begin{proof}[Proof of Theorem \ref{thm-isp}]
From the strictly positive property of the solution to the problem \eqref{eq-gov} with $F=0$ and $u_0\ge0$, it follows that the initial-boundary value problem \eqref{eq-homo} with $f\ge0$ admits a unique solution such that $v(x,t)>0$, $\forall (x,t) \in\Omega\times(0,T)$.

Then noting the obversion$\int_{\omega}u(x,t) dx=0$, $t\in(0,T)$, we conclude from the integral identity in Lemma \ref{lem-ii} that
$$
\int_{0}^{t} g(t-\tau) \left( \int_{\omega}v(x,\tau) dx \right) d\tau=0,\quad \forall t\in(0,T).
$$
Consequently, the Titchmarsh convolution theorem, see e.g., Titchmarsh \cite{T26} implies the existence of $T_1, T_2 \in [0,T]$ satisfying $T_1+T_2 \ge T$ such that $g(t) = 0$ for almost all $t \in (0, T_1)$ and $\int_\omega v(x,t)dx = 0$ for almost all $t \in [0, T_2]$. However, Theorem \ref{thm-sp} asserts that $\int_\omega v(x,t)dx >0$ is valid for any $(x,t)\in\Omega\times(0,T)$. As a result, the only possibility is $T_1=0$ and thus $T_2=T$ , that is, $g = 0$ in $(0, T )$, which finishes the proof of the theorem.
\end{proof}

\section{Numerical Simulation}
\label{sec-num}
In this section, we are devoted to developing an effective numerical method for the numerical reconstruction of the unknown source in $(0,T)$ from the addition data $\int_\omega u(x,\cdot) dx$ in $(0,T)$.

\subsection{Iterative thresholding algorithm}
\label{sec-ite}

We discuss the problem \eqref{eq-gov} with $F=g(t)f(x)$ and we write its solution of problem \eqref{eq-gov} as $u[g]$ in order to emphasize the dependency on the unknown function $g$. Here and henceforth, we set $g_{\rm true}\in L^{2}(0,T)$ as the true solution to our inverse source problem \ref{prob-isp}. By using noise contaminated observation data $E^{\delta}(t):=\int_\omega u(x,t) dx$ in $(0,T)$, where $E^{\delta}$ satisfies $\|E^{\delta}-\int_\omega u[g_{\rm true}]\|_{L^2(0,T)}\le \delta$ with the noise level $\delta$, we carry out numerical reconstruction.

In the framework of the Tikhonov regularization technique, we propose the following output least squares functional related to our inverse source problem{\color{red}:}
$$
\Phi(g)=\frac{1}{2} \left\|\int_{\omega}u[g]dx-E^{\delta}(t) \right\|_ {L^2(0,T)}^{2}+\frac12\lambda \|g\|_{L^2(0,T)}^{2},
$$
where $\lambda>0$ is the regularization parameter.

Now we intend to calculate the Fr\'{e}chet derivative $\Phi'(g)$ of the objective functional $\Phi(g)$ for finding a minimizer. Then for any direction $\xi\in L^2(0,T)$, we see that the $\Phi'(g)$ can be calculated by
 \begin{equation*}
 \begin{aligned}
 \Phi'(g)\xi&=\lim_{\varepsilon \to0}\frac{\Phi(g+\varepsilon\xi)-\Phi(t)}{\varepsilon},\\
 &=\int_{0}^{T}\left[\int_{\omega}u[g](x,t),dx-E^{\delta}(t)\right] \int_{\omega}u'[g]\xi dxdt+\lambda\int_{0}^{T}g\xi dt,
 \end{aligned}
 \end{equation*}
 where $u'[g]\xi$ denotes the Fr\'{e}chet derivative of $u[g]$ in the direction $\xi$. Moreover the linearity of \eqref{eq-gov} immediately yields
$$
u'[g]\xi=\lim_{\epsilon\rightarrow0}\frac{u[g+\epsilon \xi]-u[g]}{\epsilon}=u[\xi].
$$
Indeed, from the notation of $u[g]$, we see that $u[g]$ and $u[g+\varepsilon\xi]$  satisfy
$$
 \sum_{j=1}^\ell q_j\partial_t^{\alpha_j} u[g]+A(x)u[g]=g(t)f(x),
$$
and
$$
\sum_{j=1}^\ell q_j\partial_t^{\alpha_j} u[g+\varepsilon\xi]+A(x)u[g+\varepsilon\xi]=(g(t)+\varepsilon\xi)f(x).
$$
Then
$$
\sum_{j=1}^\ell q_j\partial_t^{\alpha_j} \frac{u[g+\varepsilon\xi]-u[g]}{\varepsilon}+A(x)\frac{u[g+\varepsilon\xi]-u[g]}{\varepsilon}=\xi f.
$$
Letting $\varepsilon\to0$, it follows that
$$
\sum_{j=1}^\ell q_j\partial_t^{\alpha_j} u'[g]\xi + A(x)u'[g]\xi = \xi f,
$$
then we get the desired result: $u'[g]\xi=u[\xi]$.
 Then
 $$
 \Phi'(g)\xi=\int_{0}^{T}\left[\int_{\omega}u[g] dx- E^{\delta}(t)\right] \int_{\omega}u[\xi] dx dt+\lambda \int_{0}^{T}g\xi dt.
$$
 \begin{remark}
It is not applicable to find the minimizer of the functional $\Phi$ directly in terms of the above formula of the Fr\'echet derivative of $\Phi(g)$. Indeed, in the computation for $\Phi'(g)$, one should solve system \eqref{eq-gov} for $u[g]$ with $\xi$ varying in $L^2(0,T)$, which is undoubtedly quite hard and computationally expensive.
\end{remark}

 For this, we introduce the dual system of \eqref{eq-gov} to reduce the computational costs for the Fr\'{e}chet derivatives, that is, the following system for a backward differential equation
 \begin{equation}
\label{eq-dual}
\left\{
\begin{alignedat}{2}
&-\sum_{j=1}^\ell q_j\partial_T^{\alpha_j} w+A(x)w=\chi_\omega \times\left[\int_{\omega}u[g] dx- E^{\delta}(t)\right] &\quad& \mbox{ in }\Omega\times(0,T),\\
&J^{1-\alpha_1}w(T)=0, &\quad& \mbox{ in }\Omega,\\
&w(x,t)=0, &\quad& \mbox{ on }\partial\Omega\times(0,T).\\
\end{alignedat}
\right.
\end{equation}
We multiply $u[\xi]$ on both sides of the above equation \eqref{eq-dual} and take integration on $\Omega\times(0,T)$,
and then we obtain
$$
\begin{aligned}
\int_{0}^{T} \int_{\Omega}\left(-\sum_{j=1}^\ell q_j\partial_T^{\alpha_j} w+A(x)w\right)u[\xi](x,t) dx dt=\int_{0}^{T} \left[ \int_{\omega} u[g] dx-E^{\delta}(t) \right] \left[ \int_{\omega}u[\xi] dx \right] dt.
\end{aligned}
$$
By integration by parts and Lemma \ref{lem-igbp}, it is not difficult to see that
\begin{align*}
\int_{0}^{T}\int_{\Omega}-\sum_{j=1}^\ell q_j\partial_T^{\alpha_j} w u[\xi] dx dt&=\int_{0}^{T}\int_{\Omega}\sum_{j=1}^\ell q_j\partial_t^{\alpha_j} u[\xi] w dx dt
\\
&=\int_{0}^{T}\int_{\Omega} w A(x) u[\xi] dx dt,
\end{align*}
which implies that
\begin{equation*}
\begin{aligned}
\int_{0}^{T}\left[\int_{\omega}u[g] dx-E^{\delta}(t)\right]\int_{\omega}u[\xi] dx dt&=\int_{0}^{T}\left[\int_{\Omega}\sum_{j=1}^\ell q_j\partial_t^{\alpha_j}u[\xi]+L(x)u[\xi] \right] w dx dt\\
&=\int_{0}^{T}\xi(t) \left[ \int_{\Omega}f(x) w dx \right] dt.
\end{aligned}
\end{equation*}
Consequently, we show that
$$
\Phi'(\rho)\xi=\int_{0}^{T} \xi(t) \left[ \int_{\Omega}f w dx \right] dt + \lambda \int_{0}^{T}g\xi dt.
$$
Since $\xi$ can be arbitrarily chosen in $L^2(0,T)$, then we see that
\begin{equation}
\label{eq-phi'}
\Phi'(g)=\int_{\Omega}f(x) w(x,t) dx+\lambda g(t),\quad t\in (0,T),
\end{equation}
where $w$ satisfies \eqref{eq-dual}. Then we can propose conjugate gradient method to reconstruct $g$ by iteration.

We propose an iteration scheme by using the conjugate gradient (CG) method for generating the minimize of $\Phi'(g)$ numerically.
We approximate $g(t)$ by the following iterative process:
$$
g_{k+1}=g_{k}+r_{k}d_{k},\quad k=0,1,\cdots
$$
for suitably chosen step size $r_{k}>0$ and initial guess $g_{0}$,
where $d_k$ is the iterative direction by
$$
d_k=
\left\{
\begin{alignedat}{2}
&-\Phi'(g_{0}), &\quad& \text{if}\quad k=0,\\
&-\Phi'(g_{k})+s_{k}d_{k-1}, &\quad& \text{if}\quad k>0\\
\end{alignedat}
\right.
$$
with
$$
s_k=\frac{\left\| \Phi'(g_{k}) \right\|_{L^2(0,T)}^{2}}{\left\| \Phi'(g_{k-1}) \right\|_{L^2(0,T)}^{2}},\quad r_k =\arg \min_{r\geq0}\Phi'(g_{k}+rd_{k}).\\
$$
Since the operator G:
$g\longmapsto\int_{\omega}u[g] dx$ is linear,
we have
$$
G(g_{k}+r_{k}d_{k})=G(g_{k})+r_{k}G(d_{k}).
$$
Then there holds
$$
\begin{aligned}
\Phi(g_{k}+r_{k}d_{k})=&\frac{1}{2}\Big[\left\| G(g_{k})-E^{\delta} \right\|_{L^2(0,T)}^{2} +r_{k}^{2}\left\| G(d_{k}) \right\|_{L^2(0,T)}^{2}+
\\& +r_{k}\left \langle G(g_{k})-E^{\delta}, G(d_{k})\right \rangle_{L^2(0,T)}
\\& +\frac{\lambda}{2}\left(\left\| g_{k} \right\|_{L^2(0,T)}^{2}+r_{k}^{2}\left\| d_{k} \right\|_{L^2(0,T)}^{2}+2r_{k}\langle g_{k},d_{k}\rangle_{L^2(0,T)}^{2}\right)\Big].\\
\end{aligned}
$$
In order to determine the step size $r_{k}$, we let $\frac{d}{dr}\Phi(g_{k}+rd_{k})=0$ and we see that
$$
r_{k}=-\frac{\left\langle G(g_{k})-E^{\delta},G(d_{k})\right\rangle_{L^2(0,T)}+\lambda\langle_{k},d_{k}\rangle_{L^2(0,T)}}{\left\| G(d_{k}) \right\|_{L^2(0,T)}^{2}+\lambda\left\| d_{k} \right\|_{L^2(0,T)}^{2}}.
$$

Based on the above discussion, we summarize the CG method for reconstructing the $g(t)$ as follows:
\begin{algo}
\label{algori-itera}
Choose a tolerance $\varepsilon>0$, a regularization parameter $\lambda>0$.
\begin{enumerate}
\item[Step 1:] Set $k=0$, the initial guess $g_0$;

\medskip

\item[Step 2:]  Compete $d_{0}(t)=-\Phi'(g)$;

\medskip

\item[Step 3:] Compute the step size $r_{0}>0$ and update $\beta_{1}=\beta_{0}+r_0d_{0}$;

\medskip

\item[Step 4:] For $k=1,w,\cdots$, compute $s_{k}$, $d_{k}$ and $r_{k}$;

\medskip

\item[Step 5:] Update $g_{k+1}=g_{k}+r_{k}d_{k}$.
If a stoping criterion is satisfied,
output $g_{k+1}$ and stop.
Otherwise,
set $k+1\Longrightarrow k$ and go to step 4.

\end{enumerate}
\end{algo}

\begin{remark}
As can be seen from \eqref{eq-phi'}, for computing $\Phi'[g_k]$ at each iteration step, one only needs to solve the forward problem \eqref{eq-gov} once and the backward problem \eqref{eq-dual} once. Therefore, the numerical implementation of Algorithm \ref{algori-itera} is easy and computationally cheap.
\end{remark}

\subsection{Numerical experiments}
\label{sec-numer}
In this part, we set $T=1$, $\Omega=(0,1)$ and $f(x)=\sin\pi x$, $x\in\Omega$, and apply the CG algorithm established in the previous subsection to numerically recovery the unknown source term. We carry out several test numerical experiments to check the performance of the reconstruction method.

We divide the space-time region $[0,1]\times[0,1]$ into $50\times50$ equidistant meshes. First we set the tolerance parameter $\varepsilon<0.001$, $\lambda=0.00001$, initial guess $g_{0}(t)=0$. We consider the noisy data generated in the form
$$
E^{\delta}(t)=(1+\delta \mathrm{rand}(-1,1)) \int_\omega u[g_{\rm true}](x,t)dx, \quad t\in (0,1),
$$
where rand$(-1,1)$ denotes the uniformly distributed random number in $[-1,1]$ and the noisy level are $\delta=0.5\%$, and $\delta=1.0\%$. We will test the performance of the algorithm with the following examples.

\begin{example}
\label{ex1}
We set $\omega=(0.4,0.6)$ and we consider numerical experiment 
with noise contaminated observation data $\delta=0.1\%$.\\
$(A)$ $\alpha=0.2$, $g_{\rm true}(t)=10t(1-t)$;\\
$(B)$ $\alpha=0.8$, $g_{\rm true}(t)=10t(1-t)$.
\end{example}

In Fig. 1, the source term is obtained after $k=4$ iteration steps, and the relative error of the reconstructed solution of the inverse source problem is less than $0.30\%$. In Fig. 2, the iteration steps $k=5$ and the relative error is less than $0.59\%$. Moreover, we can also see that the fractional order may produce a relatively large impact on the reconstruction results. It is not difficult to find that the recovered curve for the case $\alpha=0.2$ is better than that of the case $\alpha=0.8$, particularly near the integer oder $\alpha=1$. It reveals that the inverse source problem for fractional diffusion equation with higher order has much more ill-posedness than the lower order case.

\begin{example}
\label{ex2}
We set $\omega=(0.4,0.6)$ and we consider numerical experiment 
with noise contaminated observation data $\delta=1\%$.\\
$(A)$ $\alpha=0.2$, $g_{\rm true}(t)=10t(1-t)$;\\
$(B)$ $\alpha=0.8$, $g_{\rm true}(t)=10t(1-t)$.
\end{example}
The results are shown in Figs. 3 and 4, where the noisy level $\delta=1.0\%$. The relative error of the reconstructed source of the inverse problem is less than $4.3\%$ and $6.4\%$ respectively.

Similarly, the reconstruction of $g_{\mathrm true}(t)=1 - |2t-1|$ from the noisy data $E^\delta$ with noisy level $\delta=1\%$ is also performed.
\begin{example}
\label{ex3}
We consider numerical experiment with non-differentiable source.\\
$(A)$ $\alpha=0.2$, $g_{\rm true}(x)=1 - |2t-1|$;\\
$(B)$ $\alpha=0.8$, $g_{\rm true}(x)=1 - |2t-1|$.
\end{example}
The results are shown in Figs. 5 and 6, where the noisy level $\delta=1.0\%$. The source is recovered with the relative error less than $7.8\%$ and $8.5\%$ respectively.

\begin{remark}
Figs 1-4 and the relative errors indicate the efficiency and accuracy of the proposed  Algorithm \ref{algori-itera} for reconstructing the unknown source term. However, it should be mentioned here that the performance of the algorithm is not well near the left boundary $x=0$ for large $\alpha$. This can be solved by taking the observation domain near the left boundary point $x=0$, e.g., $\omega=(0,a)\cap (b,1)$. Moreover, owing to the poor regularity of the target function, the reconstruction results are not as well as those of the smooth case in the previous examples.
\end{remark}

\begin{figure}[htbp]
\centering
\begin{minipage}[t]{0.48\textwidth}
\centering
\includegraphics[width=7cm]{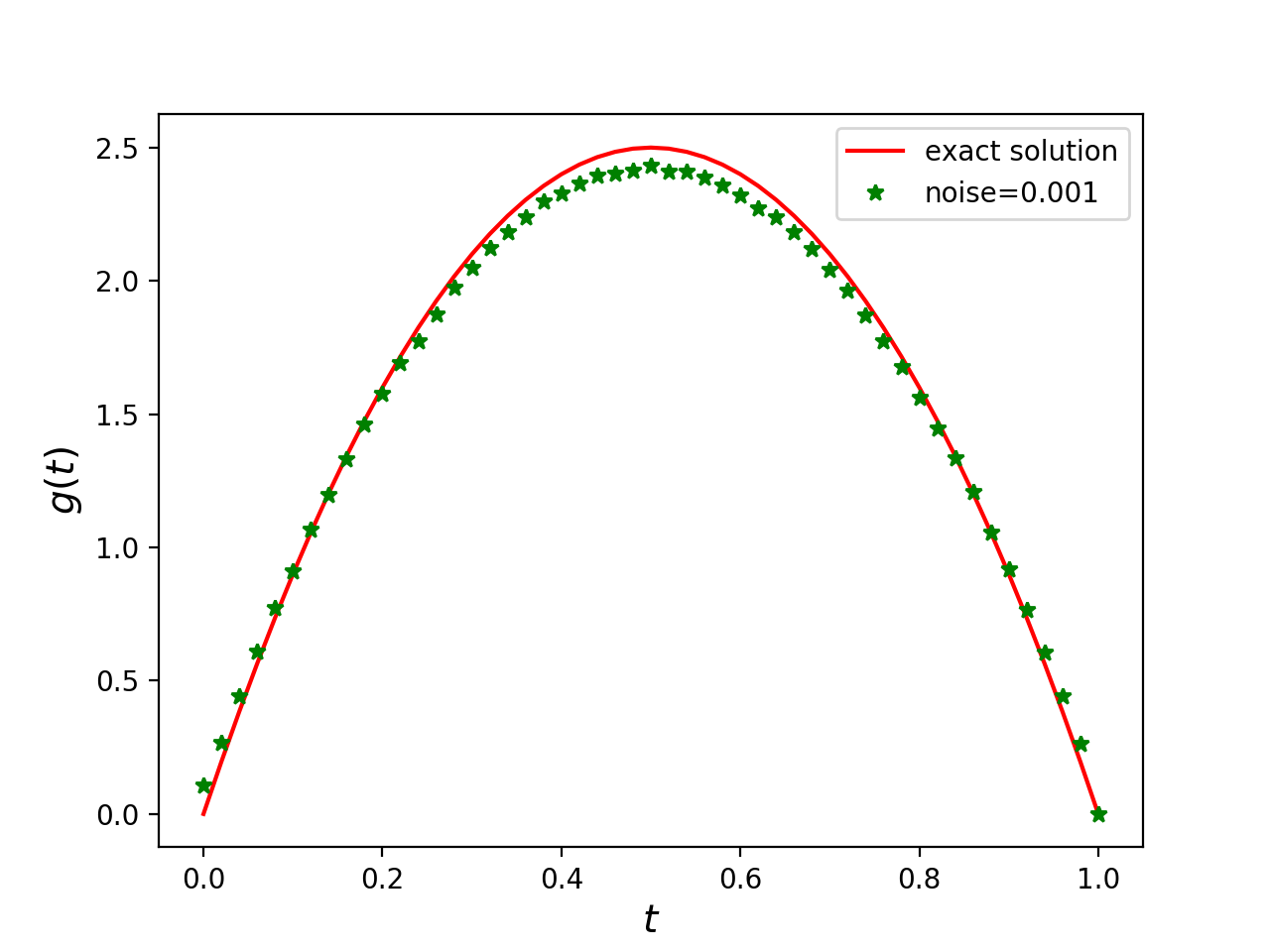}
\caption{Example \ref{ex1} (A): numerical results for different iteration times, when $\alpha=0.2$.}
\end{minipage}
\begin{minipage}[t]{0.48\textwidth}
\centering
\includegraphics[width=7cm]{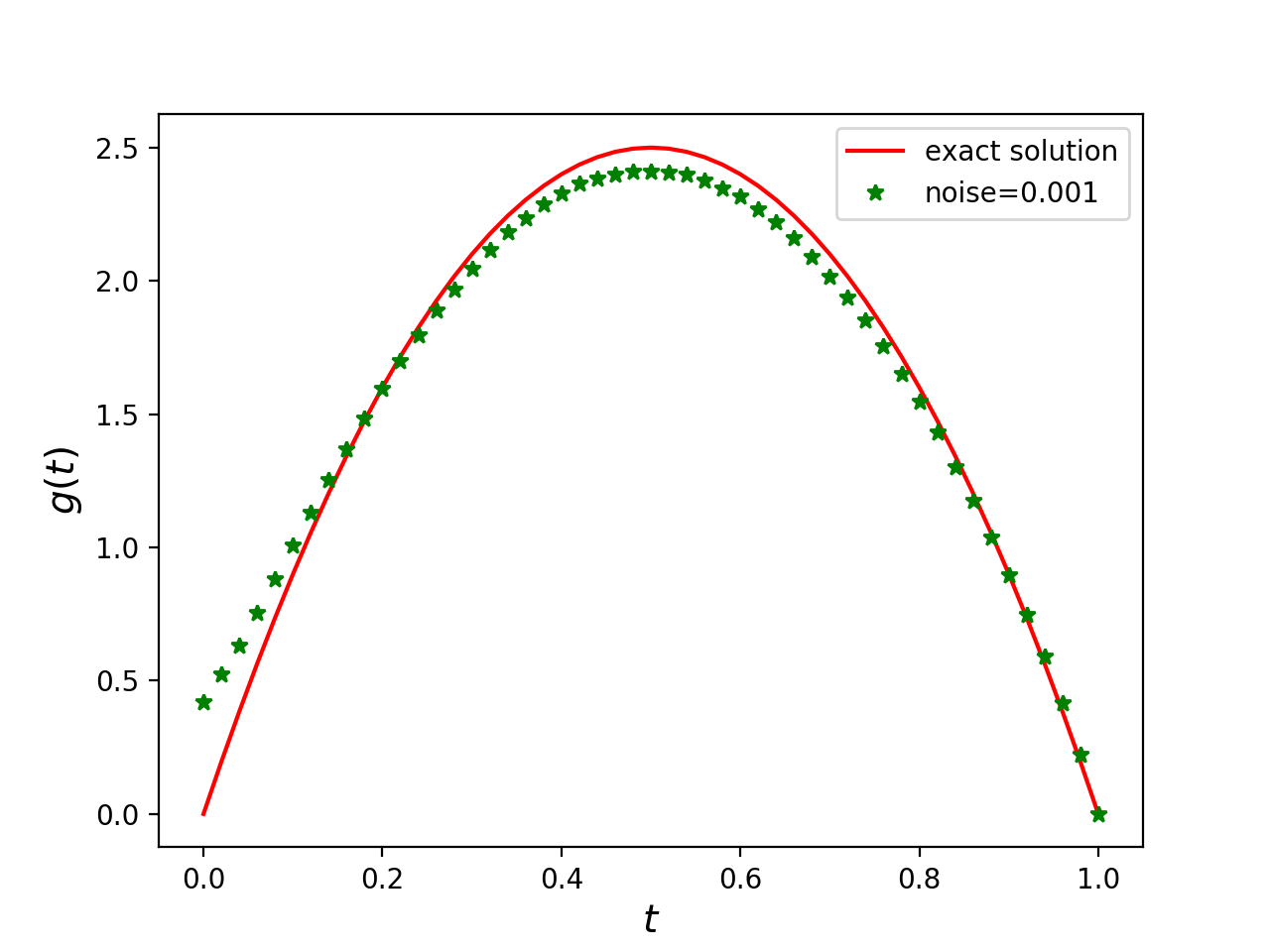}
\caption{Example \ref{ex1} (B): numerical results for different iteration times, when $\alpha=0.8$.}
\end{minipage}
\end{figure}

\begin{figure}[htbp]
\centering
\begin{minipage}[t]{0.48\textwidth}
\centering
\includegraphics[width=7cm]{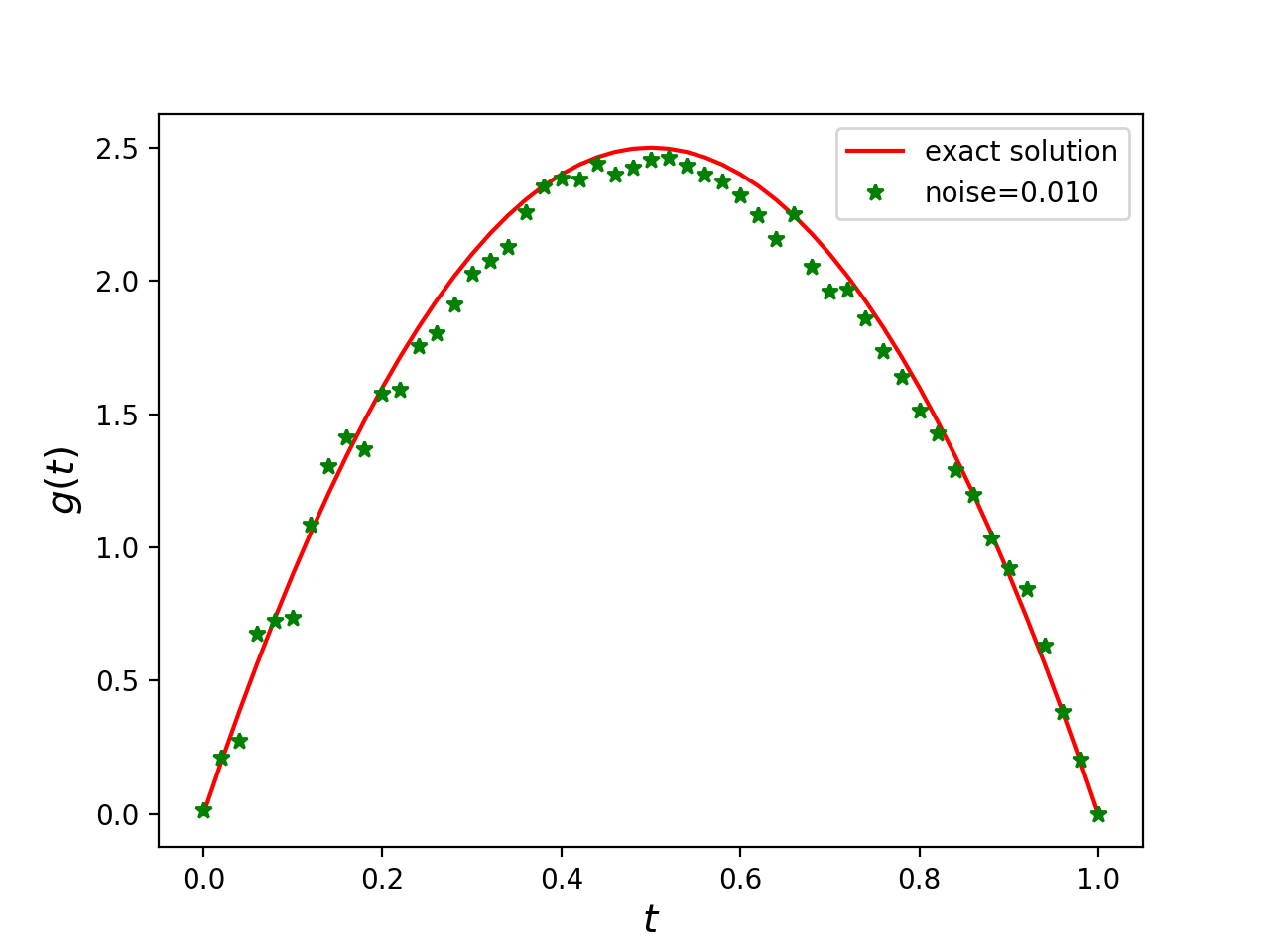}
\caption{Example \ref{ex2} (A): numerical results for different noise level, when $\alpha=0.2$.}
\end{minipage}
\begin{minipage}[t]{0.48\textwidth}
\centering
\includegraphics[width=7cm]{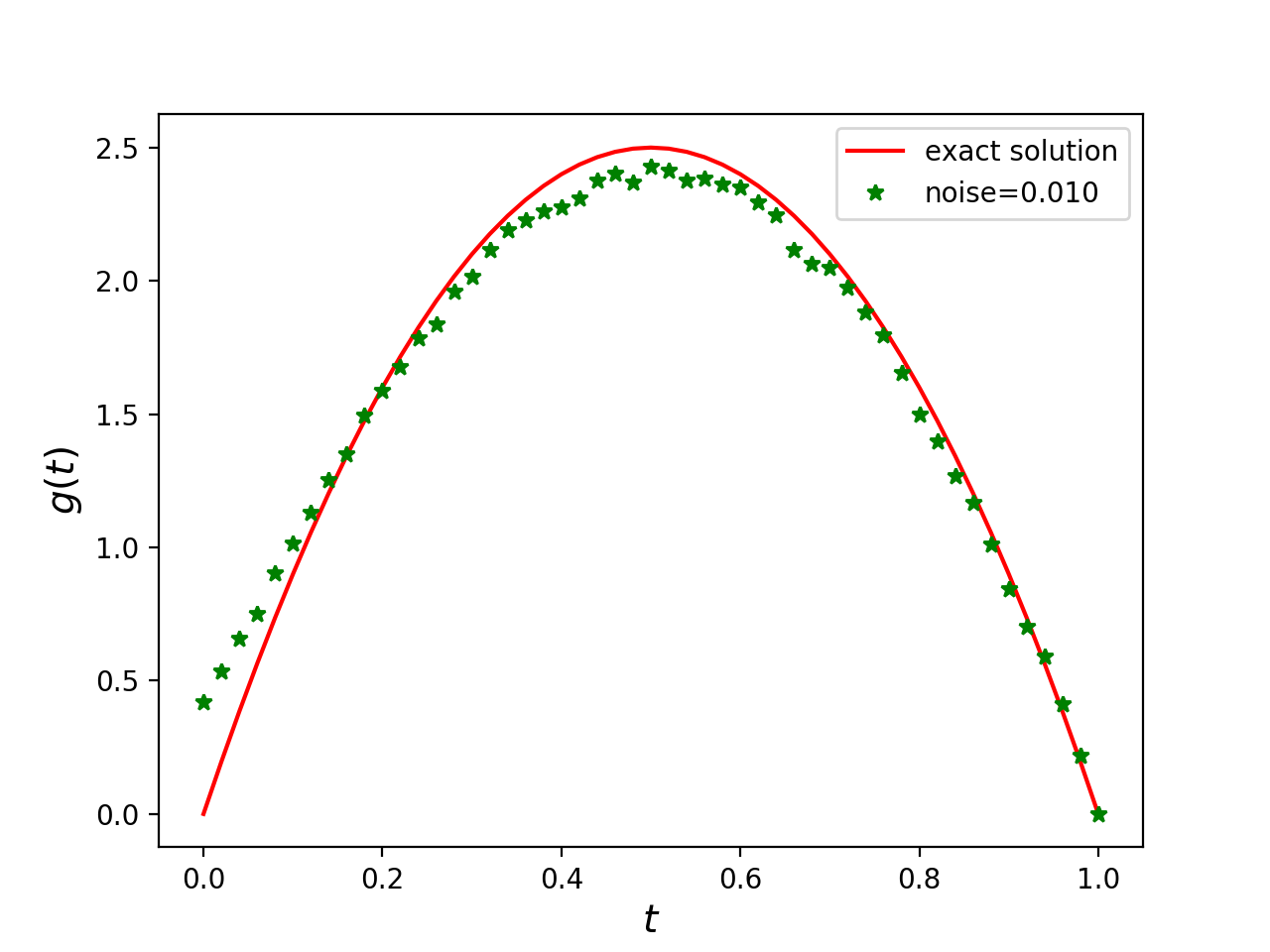}
\caption{Example \ref{ex2} (B): numerical results for different noise level, when $\alpha=0.8$.}
\end{minipage}
\end{figure}

\begin{figure}[htbp]
\centering
\begin{minipage}[t]{0.48\textwidth}
\centering
\includegraphics[width=7cm]{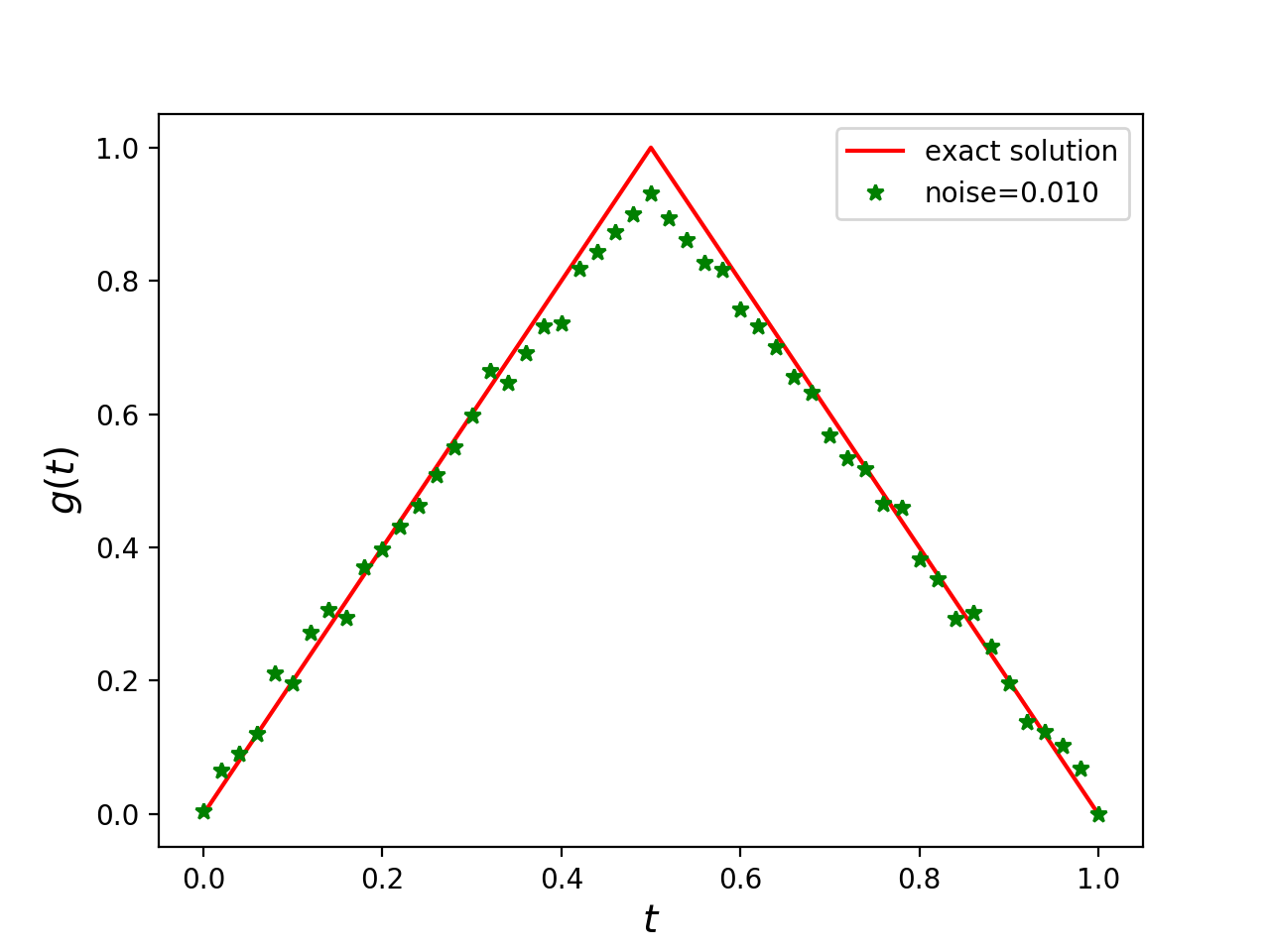}
\caption{Example \ref{ex3} (A): numerical results for different noise level, when $\alpha=0.2$.}
\end{minipage}
\begin{minipage}[t]{0.48\textwidth}
\centering
\includegraphics[width=7cm]{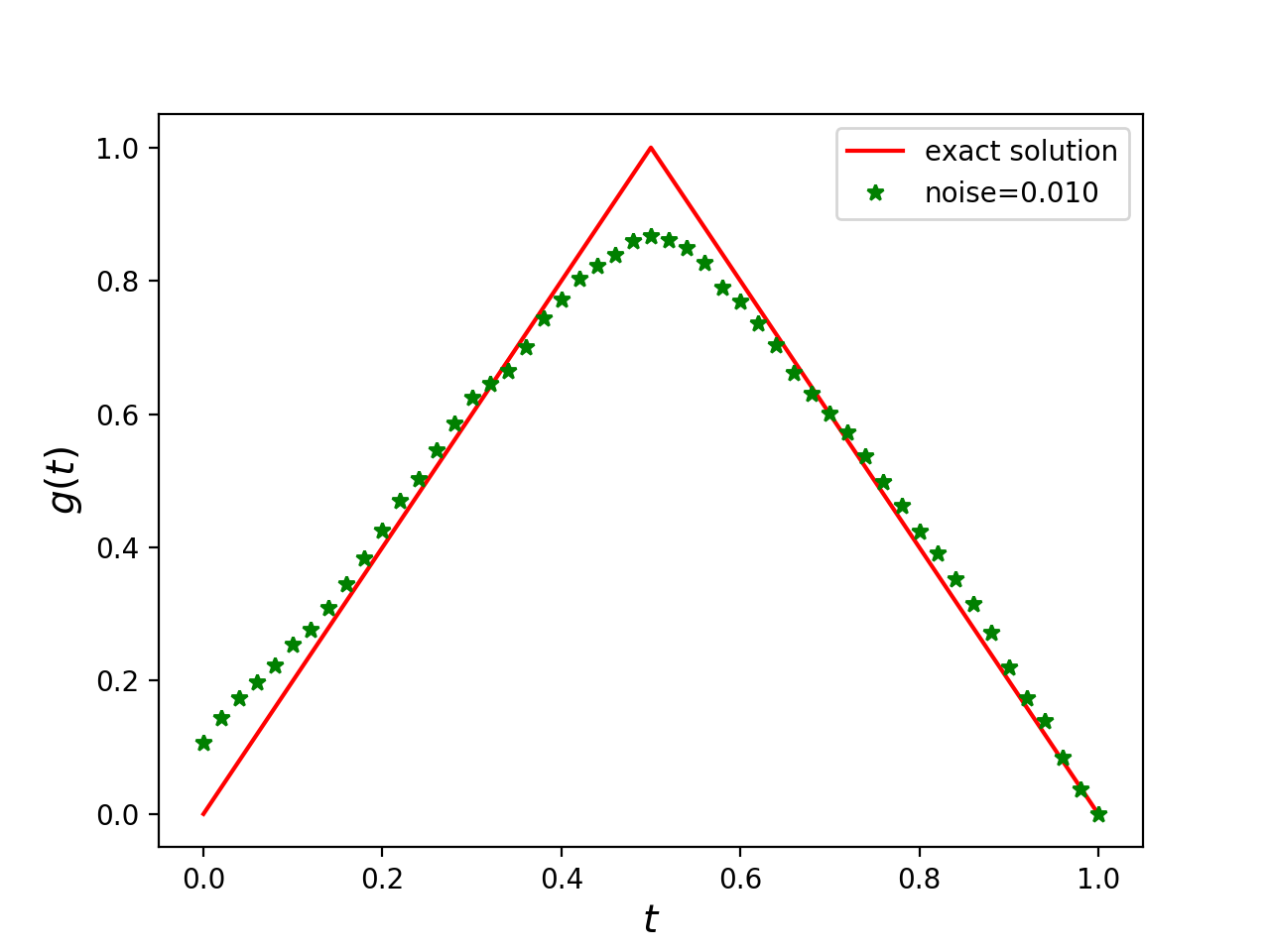}
\caption{Example \ref{ex3} (B): numerical results for different noise level, when $\alpha=0.8$.}
\end{minipage}
\end{figure}

\section{Concluding remarks}
\label{sec-rem}
In this paper, we considered the inverse problem in reconstructing the source term for the multi-term time-fractional diffusion equations from the integral type observation. By the Laplace transform argument, the subordinate identity which gives an integral representation of the solution operator in terms of the corresponding solution to the parabolic equation and a probability density function was established. It will be also interesting to consider the subordination principle that connects the fundamental solution to the problem \eqref{eq-gov} with the solution of the conventional wave equation, see e.g., Bazhlekova and Bazhlekov \cite{BB18}, from which we can further investigate the inverse coefficient problem by following the argument used in Miller and Yamamoto \cite{MY13}. This is another issue which will be considered in next papers. As a direct application of the principle, we next showed that the strong positivity property of the solution to the problem \eqref{eq-gov} with $F=0$ and $u_0\ge0$. Finally, we proved the source term can be uniquely determined from the integral type observation. There will be a challenge if the the diffusion equation has flux. Moreover, in the proofs of our results, we need the assumption that all the coefficients are only $x$-dependent. It will be more interesting and challenging to consider what happens with the properties of the solutions in the case where the coefficients are both $t$- and $x$- dependent. It will be also interesting to consider whether the uniqueness holds true or not for the inverse source problem if $f\ge0$ in $\Omega$ is not valid.

In the numerical aspect, we reformulated the inverse source problem as an optimization problem with Tikhonov regularization. After the derivation of the corresponding variational equation, we characterized the minimizer by employing the associated backward fractional diffusion equation, which results in the iterative method. Then several numerical experiments for the reconstructions were implemented to show the efficiency and accuracy of the proposed Algorithm \ref{algori-itera}. Here we should mention that for finding the minimizer of the inverse source problem is not suitable for $f$ which is not vanished on the boundary. Indeed, for deriving the Algorithm \ref{algori-itera}, the homogeneous boundary condition of the solution $w$ to the backward problem was assumed. It will be interesting to derive the iteration scheme without assuming this homogeneous boundary condition on $f$. The algorithm for the general case remains open.


\section*{Acknowledgments}

The second author thanks National Natural Science Foundation of China 11801326.


\begin{thebibliography}{79}
\bibitem{Ad}
Adams R A. {\it Sobolev Spaces}, Academic Press, New York, 1999.

\bibitem{AL}
Al-Refai M, Luchko M, Maximum principle for the multi-term time-fractional diffusion equations with the Riemann-Liouville fractional derivatives, Appl. Math. Comput., 257 (2015), 40--51.

\bibitem{AKM}
Aleroev T, Kirane M, Malik S. Determination of a source term for a time fractional diffusion equation with an integral type over-determining condition. Electron. J. Differ. Equ., 270 (2013), 1--16.

\bibitem{BB18}
Bazhlekova E, Bazhlekov I. Subordination approach to multi-term time-fractional diffusion–wave equations. Journal of Computational and Applied Mathematics, 339 (2018), 179--192.

\bibitem{B00}
Benson D A, Wheatcraft S W, Meerschaert M M.
Application of a fractional advection-dispersion equation.
\emph{Water Resources Research} \textbf{36}, No 6 (2000), 1403--1412.

\bibitem{FK}
Fujishiro K and Kian Y. Determination of time dependent factors of coefficients in fractional diffusion equations. Math. Control Relat. Fields, 6 (2016), 251--269.

\bibitem{GLY15}
Gorenflo R, Luchko Y, Yamamoto M. Time-fractional diffusion equation in the fractional Sobolev spaces. Fractional Calculus and Applied Analysis, 18 (2015), 799--820.

\bibitem{HLY}
Huang X, Li Z, Yamamoto M.
Carleman estimates for the time-fractional advection-diffusion equations
nd applications. \emph{Inverse Problems} \textbf{35} (2019), 045003.

\bibitem{JR}
Jin B, Rundell W. A tutorial on inverse problems for anomalous diffusion processes. Inverse Probl., 31 (2015), 035003.

\bibitem{JLLY}
Jiang D, Li Z, Liu Y, Yamamoto M. Weak unique continuation property and a related inverse source problem for time-fractional diffusion-advection equations. Inverse Probl., 33 (2017), 055013.

\bibitem{JPY}
Jia J, Peng J, Yang J. Harnack's inequality for a space-time fractional diffusion equation and applications to an inverse source problem. \emph{Journal of Differential Equations} \textbf{262}, No 8 (2017), 4415--4450.

\bibitem{K06}
Kilbas A, Srivastava H M, Trujillo J J.
\emph{Theory and Applications of Fractional Differential Equations}.
Elsevier, Amsterdam (2006).

\bibitem{KRY}
Kubica A, Ryszewska K, Yamamoto M. Time-Fractional Differential Equations. Springer Singapore, 2020.

\bibitem{KY}
Kubica A, Yamamoto M. Initial-boundary value problems for fractional diffusion equations with time-dependent coeffcients. Fractional Calculus \& Applied Analysis, 21 (2018).

\bibitem{LB03}
Levy M, Berkowitz B.
Measurement and analysis of non-Fickian dispersion in heterogeneous porous
media.
\emph{Journal of Contaminant Hydrology} \textbf{64}, No 3 (2003), 203--226.

\bibitem{LHY}
Li Z, Huang X, Yamamoto M. Initial-boundary value problems for multi-term time-fractional diffusion equations with $x$-dependent coefficients. Evolution Equation \& Control Theory, 9(1) (2020), 153--179.

\bibitem{LIY16}
Li Z,  Imanuvilov O Y,  Yamamoto M. Uniqueness in inverse boundary value problems for fractional diffusion equations. Inverse Problems, 32(1)(2016), 015004.

\bibitem{LLY15}
Li Z, Liu Y, Yamamoto M.
Initial-boundary value problems for multi-term time-fractional diffusion
equations with positive constant coefficients.
\emph{Applied Mathematics and Computation} \textbf{257} (2015), 381--397.

\bibitem{LLY14}
Li Z, Luchko Y, Yamamoto M.
Asymptotic estimates of solutions to initial-boundary-value problems for
distributed order time-fractional diffusion equations.
\emph{Fract. Calc. Appl. Anal.} \textbf{17}, No 4 (2014), 1114--1136.

\bibitem{LY-FCAA}
Li Z, Yamamoto M.
Unique continuation principle for the one-dimensional time-fractional diffusion equation. \emph{Fractional Calculus and Applied Analysis} \textbf{22}, No 3(2019), 664--657.

\bibitem{LiZ}
Li Z, Zhang Z. Unique determination of fractional order and source term in a fractional diffusion equation from sparse boundary data, Inverse Problems 36 (2020) 115013 (20pp).

\bibitem{L09}
Luchko Y. Maximum principle for the generalized time-fractional diffusion equation.
\emph{Journal of Mathematical Analysis and Applications} \textbf{351}, No 1 (2009), 218--223.

\bibitem{LRY}
Liu Y, Rundell W, Yamamoto M. Strong maximum principle for fractional diffusion equations and an application to an inverse source problem. \emph{Fractional Calculus and Applied Analysis} \textbf{19}, No 4 (2016), 888--906.

\bibitem{Liu17}
Liu Y, Strong maximum principle for multi-term time-fractional diffusion equations and its application to an inverse source problem, Comput. Math. Appl., 73 (2017), 96--108.

\bibitem{LZ}
Liu Y,  Zhang Z. Reconstruction of the temporal component in the source term of a (time-fractional) diffusion equation. J. Phys. A, 50 (2017), 305203.

\bibitem{LY}
Luchko Y, Yamamoto M. Maximum principle for the time-fractional PDEs. Handbook of Fractional Calculus with Applications, 2(2019), 299--326.



\bibitem{MS01}
Miller K S ,  Samko S G. Completely monotonic functions. Integral Transforms and Special Functions, 12(4) (2001), 389--402.


\bibitem{MY13}
Miller L, Yamamoto M. Coefficient inverse problem for a fractional diffusion equation. Inverse Problems, 29(7) (2013), 075013.

\bibitem{P99}
Podlubny I.
\emph{Fractional Differential Equations}.
Academic, San Diego (1999).

\bibitem{RW}
Ruan Z, Wang Z. Identification of a time-dependent source term for a time fractional diffusion problem. Appl. Anal., 96 (2017), 1638--1655.

\bibitem{SY11-JMAA}
Sakamoto K, Yamamoto M. Initial value/boundary value problems for fractional  diffusion-wave equations and applications to some inverse problems. J. Math. Anal. Appl., 382 (2011), 426--447.

\bibitem{SK93}
Samko S G, Kilbas AA, Marichev O I. Fractional Integrals and Derivatives, Gordon and Breach Science Publishers, Philadelphia, 1993.

\bibitem{SY11-MCRF}
Sakamoto K,  Yamamoto M. Inverse source problem with a final overdetermination for a fractional diffusion equation. Math. Control Relat. Fields, 1 (2011), 509--518.

\bibitem{S99}
Schiff J L. The Laplace Transform: Theory and Applications. Springer; 1999.

\bibitem{St}
Stein E M, Shakarchi R. Complex analysis. Princeton University Press, 2010.

\bibitem{T26}
Titchmarsh E C. The zeros of certain integral functions Proc. London Math. Soc. 2 (1926), 283--302

\bibitem{WW}
Wang H, Wu B. On the well-posedness of determination of two coefficients in a fractional 6 integrodifferential equation. Chin. Ann. Math., 35B (2014), 447--468.

\bibitem{WZ97}
Wei Z, Zhu Y. The extended Jordan's lemma and the relation between Laplace transform and Fourier transform. Applied Mathematics and Mechanics (English Edition) 18(6) (1997), 571--574.


\bibitem{WZW}
Wang J, Zhou Y, Wei T. Two regularization methods to identify a space-dependent source for the time-fractional diffusion equation. Appl. Numer. Math., 68 (2013), 39--57.

\bibitem{WLL}
Wei T, Li X, Li Y. An inverse time-dependent source problem for a time-fractional diffusion equation, Inverse Probl., 32 (2016), 085003.

\bibitem{Za}
Zacher R. Weak solutions of abstract evolutionary integro-differential equations in Hilbert spaces, Funkcialaj Ekvacioj, 52 (2009), 1--18.

\bibitem{ZX}
Zhang Y, Xu X. Inverse source problem for a fractional diffusion equation. Inverse Probl., 27 (2011), 035010.

\end{thebibliography}
\end{document}